\documentclass[a4, 12pt]{amsart}

\usepackage{fullpage}
\usepackage[title]{appendix}
\usepackage{graphicx}
\usepackage{subcaption}
\usepackage{amsthm}
\usepackage{lmodern}
\usepackage{mathrsfs} 
\usepackage{amsfonts}
\usepackage{amssymb}
\usepackage{diagbox}
\usepackage{mathtools}
\usepackage{longtable} 
\usepackage{MnSymbol}
\usepackage{indentfirst}
\usepackage{listings,xcolor}
\usepackage[all]{xy}
\usepackage[colorlinks=true,linkcolor=blue]{hyperref}
\usepackage{ytableau}
\usepackage{tikz-cd}
\usepackage{enumerate}
\theoremstyle{plain}
\newtheorem{theorem}{Theorem}[section]

\newtheorem{lemma}[theorem]{Lemma}

\newtheorem{proposition}[theorem]{Proposition}

\theoremstyle{remark}

\newtheorem{remark}{Remark}[section]

\makeatletter

\newcommand{\Rmnum}[1]{\expandafter\@slowromancap\romannumeral #1@}
\makeatother

\def\bz{\mathbb Z}
\def\bc{{\mathbb C}}

\newmuskip\pFqmuskip

\numberwithin{equation}{section}
\allowdisplaybreaks[2] 

\begin{document}

\title[Unimodality and concave composition]{Monotonicity of the rank functions for \\ concave compositions}
\author{Nian Hong Zhou}

\address{N. H. Zhou: School of Mathematics and Statistics, The Center for Applied Mathematics of Guangxi, Guangxi Normal University, Guilin 541004, Guangxi, PR China}
\email{nianhongzhou@outlook.com; nianhongzhou@gxnu.edu.cn}%

\subjclass{Primary 05A30;  Secondary 05A15, 11F27}%
\keywords{Unimodality; Concave composition; Ranks; Difference systems}%

\begin{abstract}
A (strongly) concave composition of an integer $n$ is a sequence of positive integers that is (strictly) decreasing to a point and then (strictly) increasing thereafter, such that the sum of the entries equals $n$. The value at the low point is called the center part. The difference between the number of entries before and after the low point of the sequence is referred to as the rank of the (strongly) concave composition. The rank functions $V_d(m,n)$ and $V(m,n)$ are defined as the number of concave compositions and strongly concave compositions, respectively, of $n$ with rank $m$. By constructing the difference systems that characterize the rank generating functions, we establish monotonicity properties for the rank functions of both strongly concave compositions and concave compositions for all positive integers $n$. Moreover, we also study the monotonicity properties for the rank functions of (strongly) concave compositions with fixed center parts.
\end{abstract}
\maketitle

\section{Introduction}
A \emph{composition} of an integer $n$ is defined as an ordered sequence of positive integers summing to $n$.
The study of compositions has a long history, dating back to MacMahon's seminal work.
It is natural to impose restrictions on the ascents or descents of consecutive parts within a composition.
For instance, compositions with no ascents between consecutive parts correspond precisely to the well--known \emph{integer partitions}. In \cite{MR3048655}, Andrews examines a new restriction on compositions, termed \emph{concave compositions}. A concave composition $\sigma$ is a nonnegative integer sequence of the form
\begin{align}\label{eq1}
a_1\ge a_2\ge \dots\ge a_{l}>c<b_{r}\le \dots\le b_{2}\le b_1,
\end{align}
where $\ell_L(\sigma):=l$, $\ell_R(\sigma):=r$, $c(\sigma):=c$ and 
$$|\sigma|:=a_1+\dots+a_{l}+c+b_{r}+ \dots+b_1$$ are called the number of parts in the left hand-side, the number of parts in the right hand-side, the central part and the size of $\sigma$, respectively. If all the ``$\le$" and ``$\ge$" in \eqref{eq1} are replaced by ``$<$" and ``$>$", respectively, we refer to a \emph{strongly
concave composition}. We call $\sigma$ a (strongly) concave composition of an integer $n$ if $|\sigma|=n$.

\medskip

Denote by $V(n)$ and $V_d(n)$ the number of all concave compositions and strongly concave compositions, respectively, of $n$. Andrews \cite{MR3048655} proved that
$$
v(q):=\sum_{n\ge 0}V(n)q^n=\sum_{n\ge 0}\frac{q^n}{(q^{n+1})_{\infty}^2}
$$
and
$$
v_d(q):=\sum_{n\ge 0}V_d(n)q^n=\sum_{n\ge 0}(-q^{n+1})_{\infty}^2q^n.
$$
Here $(a)_{n}:=(a;q)_n$ is the $q$-shifted factorial defined by
$$(a;q)_n:=\prod_{0\le j<n}(1-aq^j)$$
for any $n\in \bz_{\ge 0}\cup\{\infty\}$. The generating functions $v(q)$ and $v_d(q)$ exhibit elegant modular  properties. In particular, Andrews \cite{MR3048655} proved that
$v_d(q)+\sum_{n\ge 0}(-1)^nq^{{n(n+1)}/{2}}$ is essentially a \emph{modular form} multiplied by a \emph{partial theta function}, while Andrews--Rhoades--Zwegers \cite{MR3152010} demonstrated that $v(q)$ is a \emph{mixed mock modular form} in a more general sense than is typically used.
\medskip

By definition, we see that the (strongly) concave composition statistics $\ell_L$ and $\ell_R$ are equidistributed.
To measure and characterize the position of the central part $c(\sigma)$, Andrews--Rhoades--Zwegers \cite{MR3152010} define a statistic called the \emph{rank} of the concave composition $\sigma$ as follows:
\begin{align*}
{\rm rank}(\sigma):=\ell_{R}(\sigma)-\ell_L(\sigma).
\end{align*}
Denote by $V^c(m,n)$ and $V_d^c(m,n)$ the number of concave compositions and strongly concave compositions, respectively, of $n$ with rank $m$ and central part $c$. Let
$$V(m,n):=\sum_{c\ge 0}V^c(m,n)\;\; \text{and}\;\; V_d(m,n):=\sum_{c\ge 0}V_d^c(m,n),$$
which counts the number of all concave compositions and strongly concave compositions, respectively, of $n$ with rank $m$. Then, by standard counting techniques, we obtain the following generating functions:
\begin{align*}
\mathcal{V}^c(z,q)&:=\sum_{n\ge 0}\sum_{m\in\bz}V^c(m,n)z^{m}q^n=\frac{q^c}{(zq^{c+1})_\infty (z^{-1}q^{c+1})_\infty},\\
\mathcal{V}(z,q)&:=\sum_{n\ge 0}\sum_{m\in\bz}V(m,n)z^{m}q^n=\sum_{c\ge 0}\frac{q^c}{(zq^{c+1})_\infty (z^{-1}q^{c+1})_\infty},\\
\mathcal{V}_d^c(z,q)&:=\sum_{n\ge 0}\sum_{m\in\bz}V_d^c(m,n)z^{m}q^n=q^c(-zq^{c+1})_\infty (-z^{-1}q^{c+1})_\infty,\\
\mathcal{V}_d(z,q)&:=\sum_{n\ge 0}\sum_{m\in\bz}V_d(m,n)z^{m}q^n=\sum_{c\ge 0}q^c(-zq^{c+1})_\infty (-z^{-1}q^{c+1})_\infty.
\end{align*}
Clearly, for any $n\ge 0$, all of $V^c(m,n)$, $V(m,n)$, $V_d^c(m,n)$ and $V_d(m,n)$ are even functions in $m$ on $\bz$. The above generating functions also exhibit elegant modular properties. 
For examples, from Andrews--Rhoades--Zwegers \cite[Theorem 1.6]{MR3152010}, we see that
the generating functions $\mathcal{V}(z,q)$ and $\mathcal{V}^c(z,q)$ are related to \emph{quantum Jacobi forms} and \emph{mock Jacobi forms}.

\medskip

Recall that a \emph{(strictly) unimodal sequence} is a finite sequence of real numbers that first (strictly) increases and then (strictly) decreases. In \cite[Theorem 1.3]{MR4001540}, Zhou established that
uniformly for all integers $m=o\left(n^{3/8}\right)$ and as $n\to\infty$,
\begin{equation*}
\frac{V_d\left(m,n\right)}{V_d(n)}\sim \frac{1}{\sqrt{2\pi}}\beta_n^{1/2}e^{-\frac{1}{2} \beta_n m^2}.
\end{equation*}
This uniform asymptotic formula indicated that under suitably normalized, the rank function $V_d(m,n)$ of a strongly concave compositions have a Gaussian distribution as limit law, i.e., for large $n$ plotting  $V_d(m,n)$ against rank $m$ should give the bell curve of a Gaussian distribution. This was actually conjectured by Andrews--Rhoades--Zwegers \cite[p. 2109]{MR3152010}. This Gaussian limit law also suggests the unimodality of $V_d(m,n)$. Recently, by using an algebraic method, Zhou \cite[Theorem 4.3]{MR4903333} proved that the sequences:
$$\big(V(2m,n)\big)_{m\in\bz}\;\; \text{and}\;\; \big(V(2m+1,n)\big)_{m\in\bz}$$
are unimodal for any $n \ge 0$.  In \cite[Theorem 4.3]{MR4903333},  Zhou also proved that the sequences:
$$\big(V_d(2m,n)\big)_{m\in\bz}\;\; \text{and}\;\; \big(V_d(2m+1,n)\big)_{m\in\bz}$$
are strictly unimodal for any $n \ge 0$.  Zhou \cite[Conjecture 4.1]{MR4903333} further conjectured that the sequence $(V(m,n))_{m\in\bz}$ and $(V_d(m,n))_{m\in\bz}$ is unimodal for any $n \ge 0$.

\medskip

In this paper, we establish some monotonicity results for the rank functions $V^c(m,n)$, $V(m,n)$, $V_d^c(m,n)$ and $V_d(m,n)$. In particular, we refine and prove the above unimodality conjecture of the author for $(V_d(m,n))_{m\in\bz}$ and give a partial answer for the unimodality of $(V(m,n))_{m\in\bz}$.  Throughout this paper, for any power series $\sum_{n\ge 0}A(m,n)q^n$ where $m$ is a nonnegative integer, we define for $m\ge 1$:
\begin{align}\label{eqda}
D_{A}(m, q):=\sum_{n\ge 0}(A(m-1,n)-A(m,n))q^{n}.
\end{align}
Let ${\bf 1}_{event}$ be the indicator function, and for any two formal power series $f(q)$ and $g(q)$, the notation $f(q)\succeq g(q)$ means that all coefficients of $f(q)-g(q)$ are nonnegative. Our main results of this paper are as follows.

\begin{theorem}\label{main1}Let $m>c\ge 0$ be any integers. Then we have
$$D_{V^c}(m,q)\succeq -q{\bf 1}_{(c,m)=(0,1)}\;\;\text{and}\;\;D_{V_d^c}(m,q)\succeq -q{\bf 1}_{(c,m)=(0,1)}.$$
In particular, for all $(c, m, n)\neq (0,1,1)$ we have
$$V^{c}(m-1,n)\ge V^{c}(m,n)\;\;\text{and}\;\; V_d^{c}(m-1,n)\ge V_d^{c}(m,n).$$
\end{theorem}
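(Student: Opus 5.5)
We would work directly with the $z$-expansions of $\mathcal V^c$ and $\mathcal V_d^c$. Expanding each factor with Euler's identities,
$$\frac{1}{(zq^{c+1})_\infty}=\sum_{j\ge0}\frac{z^jq^{(c+1)j}}{(q)_j},\qquad (-zq^{c+1})_\infty=\sum_{j\ge0}\frac{z^jq^{j(j+1)/2+cj}}{(q)_j},$$
and extracting the coefficient of $z^m$ with $m\ge0$ gives
$$\sum_{n}V^c(m,n)q^n=q^c\sum_{k\ge0}\frac{q^{(c+1)(2k+m)}}{(q)_k(q)_{k+m}},$$
together with an analogous formula for $V_d^c$ in which the exponent is $\binom{k+1}{2}+\binom{k+m+1}{2}+c(2k+m)$. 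Since the functions are even in $m$, we only need $m\ge1$; the hypothesis $m>c$ then ensures $k+m\ge c+1$ for every $k\ge0$.

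Next we form $D_{V^c}(m,q)$ termwise. We write the $(m-1)$-term as $q^{(c+1)(2k+m-1)}(1-q^{k+m})/((q)_k(q)_{k+m})$ and subtract the $m$-term. This gives
$$D_{V^c}(m,q)=q^c\sum_{k\ge0}\frac{q^{(c+1)(2k+m-1)}\bigl(1-q^{c+1}-q^{k+m}\bigr)}{(q)_k(q)_{k+m}}.$$
Because $c+1\le k+m$, the factor $1-q^{c+1}$ cancels a factor of $(q)_{k+m}$. Each summand is therefore
$$\frac{q^{E_k}}{(q)_k}\Bigl(\prod_{\substack{1\le j\le k+m\\ j\ne c+1}}\frac{1}{1-q^j}-\frac{q^{k+m}}{(q)_{k+m}}\Bigr),\qquad E_k=c+(c+1)(2k+m-1).$$
The first product counts partitions with parts $\le k+m$ and no part equal to $c+1$. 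The subtracted term counts partitions whose largest part is exactly $k+m$. We must dominate the second family by the first, after summing over $k$.

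The key step, and the main obstacle, is a weight-preserving injection. It must send (partition into parts $\le k$, partition with largest part exactly $k+m$) at level $k$ into the positive family at some level $k'$, absorbing a discrepancy in the prefactors $q^{E_k}$. The natural first attempt is as follows. If the second partition contains no part $c+1$, it already lies in the positive family at the same $k$. Otherwise it contains $t\ge1$ copies of $c+1$. We then shift to level $k-1$, which frees $2(c+1)$ from the prefactor, and try to re-encode the $c+1$'s plus this spare weight by moving the largest part $k+m$ and one part of the $k$-side partition. If $k-1$ is unavailable, we instead merge pairs of $c+1$'s into parts $2c+2$, which is legitimate when $2c+2\le k+m$. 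If the direct injection gets stuck, we would replace it by a telescoping (Abel summation) in $k$. The sum $\sum_k q^{E_k}q^{k+m}/((q)_k(q)_{k+m})$ can be rewritten via $q^{k+m}=1-(1-q^{k+m})$, which turns it into a difference of consecutive-$k$ terms. We would then verify nonnegativity of the resulting positive-minus-positive series coefficientwise. The exceptional case $(c,m)=(0,1)$ should surface as the single boundary term $k=0$ in which no room exists. Here the numerator is $1-q-q$, and its lone negative coefficient is the $-q$ in the statement. We expect a direct check of this term to confirm the stated bound $\succeq -q$.

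The strongly concave case runs in parallel. The $(m-1)$-term again contributes a factor $(1-q^{k+m})$. The quotient of the $m$-exponent by the $(m-1)$-exponent is $q^{k+m+c}$, so the numerator becomes $1-q^{k+m}-q^{k+m+c}$, with $k+m\ge c+1$. Here the negative terms have high degree relative to the cancellation available. We would reuse the same injection or telescoping, now with distinct-part interpretations, where $q^{\binom{j+1}{2}}/(q)_j$ counts partitions into exactly $j$ distinct parts. Once both inequalities hold, the coefficientwise statement $V^c(m-1,n)\ge V^c(m,n)$, and likewise for $V_d^c$, follows for all $(c,m,n)\ne(0,1,1)$. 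The remaining regime $m\le c$ lies outside the statement.
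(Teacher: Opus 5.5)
There is a genuine gap. Your generating functions and the formula
$D_{V^c}(m,q)=q^c\sum_{k\ge0}q^{(c+1)(2k+m-1)}(1-q^{c+1}-q^{k+m})/\bigl((q)_k(q)_{k+m}\bigr)$ are correct, and so is its strongly concave analogue with numerator $1-q^{k+m}-q^{k+m+c}$. You also correctly note that the summands are not individually nonnegative. But that is exactly where the proof has to happen, and the proposal does not supply it.

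\emph{The injection is not constructed.} No map is defined, and neither injectivity nor weight preservation is checked. Your instinct to move weight from level $k$ to level $k-1$ is the right one, but it is not turned into an argument.

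\emph{The telescoping fallback does not work.} Writing $q^{k+m}=1-(1-q^{k+m})$ gives $q^{k+m}/(q)_{k+m}=1/(q)_{k+m}-1/(q)_{k+m-1}$. This is a difference in the second Pochhammer index at fixed $k$. Because the weight $q^{E_k}/(q)_k$ changes with $k$, nothing telescopes, and substituting back simply returns the original ``$(m-1)$-term minus $m$-term''.

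\emph{The exceptional-case heuristic is wrong.} For $(c,m)=(0,1)$ the $k=0$ summand is $(1-2q)/(1-q)=1-\sum_{n\ge1}q^n$, which is negative in every positive degree. Already for $(c,m)=(0,2)$ the $k=0$ summand $q(1-q-q^2)/(q)_2$ has coefficient $-1$ at $q^4$. So the lone $-q$ only appears after compensation across different $k$, which is precisely the missing step.

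The missing idea is an explicit re-indexing across levels. In the $(m-1)$-sum, write $1=(1-q^k)+q^k$ and move the $(1-q^k)$-part from index $k$ to $k-1$, where it is absorbed by $(q)_k$. In the $m$-sum, write $1=(1-q^{k+m})+q^{k+m}$. This is the paper's Proposition 2.1, $\alpha f_a+\beta g_a=(\alpha+\beta)f_{a+1}+x(q^a\alpha+\beta)g_{a+1}$. Your $V_d^c$ formula reads $D_{V_d^c}(m,q)=q^{\binom{m}{2}+mc}(q)_{m-1}^{-1}\bigl(f_{2c+1}(q^m)-q^{m+c}g_{2c+1}(q^m)\bigr)$, and one application gives
\[
q^{-\binom{m}{2}-mc}D_{V_d^c}(m,q)=\frac{1-q^{m+c}}{(q)_{m-1}}f_{2c+2}(q^m)+q^{m+2c+1}\frac{1-q^{m-c-1}}{(q)_{m-1}}g_{2c+2}(q^m).
\]
This is manifestly nonnegative for $m\ge2$. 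The factor $1-q$ of $(q)_{m-1}$ turns $1-q^{m+c}$ into a polynomial with nonnegative coefficients. The factor $1-q^{m-c-1}$ either vanishes ($m=c+1$) or cancels a factor of $(q)_{m-1}$. The case $m=1$ forces $c=0$ and gives $(1-q)f_2(q)\succeq 1-q$.

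For $V^c$ the paper does not work with your linear-exponent sum. It first uses the $q$-binomial theorem to obtain
$\sum_n V^c(m-1,n)q^n=\frac{q^{m(c+1)-1}}{(q^{2c+2})_\infty}\sum_i\frac{q^{i^2+i(m+2c+1)}}{(q)_i(q)_{i+m-1}}$. This makes $D_{V^c}$ a multiple of $f_{2c+2}(q^m)-q^{c+1}g_{2c+2}(q^m)$, to which the identity is applied twice. Even then, extra arguments are needed for $m=c+1$ and for $c=0$, in particular $m=1$ and $m=3$.
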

\begin{remark}
By Proposition \ref{pro320} for $V^{c}(m,n)$, along with Proposition \ref{pro32} for $V_d^{c}(m,n)$, one can derive the precise conditions under which the strict inequalities $V^{c}(m-1,n) > V^{c}(m,n)$ and $V_d^{c}(m-1,n) > V_d^{c}(m,n)$ hold. We leave this to interested readers.
\end{remark}

\begin{theorem}\label{main2}Let $m\ge 2$. Then we have
$$q^{-m}D_V(m,q)\succeq q^3{\bf 1}_{m=2}+\frac{1}{(q^2)_\infty}(1-q^3{\bf 1}_{m=2}).$$
In particular,
$V(m-1,n)> V(m,n)$
for all $n\ge m$, except that $V(n-2,n)=V(n-1,n)$.
\end{theorem}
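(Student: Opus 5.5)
The plan is to reduce everything to the fixed-center functions and Theorem \ref{main1}, then handle what Theorem \ref{main1} does not cover by explicit series manipulation. Since $V(m,n)=\sum_{c\ge 0}V^c(m,n)$, we have
$$D_V(m,q)=\sum_{0\le c<m}D_{V^c}(m,q)+\sum_{c\ge m}D_{V^c}(m,q).$$
Because $m\ge 2$, the exceptional case $(c,m)=(0,1)$ of Theorem \ref{main1} never occurs. Hence the first sum is $\succeq 0$, and it will also be used to produce the positive main term. The second sum is not covered by Theorem \ref{main1}, so it must be controlled by an explicit formula.

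\textbf{Step 1: explicit series.} First I extract the coefficient of $z^m$ from $\mathcal V^c(z,q)$. Expanding both infinite products by Euler's identity gives
$$\sum_{n}V^c(m,n)q^n=\sum_{k\ge 0}\frac{q^{c+(c+1)(2k+m)}}{(q)_k(q)_{k+m}}.$$
Summing the geometric series over $c$ then gives
$$\sum_n V(m,n)q^n=\sum_{k\ge 0}\frac{q^{2k+m}}{(q)_k(q)_{k+m}(1-q^{2k+m+1})}.$$
Thus $q^{-m}D_V(m,q)$ becomes a difference of two such series, one at $m-1$ and one at $m$.

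\textbf{Step 2: regroup and bound.} I will compare the $k$-th term at rank $m-1$ with the $k$-th term at rank $m$ and write the difference over a common denominator. The numerator has the form $(1-q^{k+m})(1-q^{2k+m+1})-q(1-q^{2k+m})$, up to a shift in powers of $q$. I aim to show that, after pairing it with part of the $(k+1)$-st term where necessary, each grouped piece is a series with nonnegative coefficients.

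The piece with $k=0$ should give the main term. Writing $q^{-m}\cdot\frac{q^{m-1}}{(q)_{m-1}(1-q^m)}$ minus its rank-$m$ analogue and simplifying with $(1-q)$ cancellations, I expect to isolate a factor of the form $\frac{1}{(q^2)_{m-1}}$ times a polynomial. For $m\ge 3$ this is $\succeq \frac{1}{(q^2)_\infty}$ after discarding nonnegative remainders. For $m=2$ a small correction $q^3(1-1/(q^2)_\infty)$ remains, and it must be absorbed by hand by checking a few initial coefficients. This explains the indicator terms in the statement.

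\textbf{Step 3: the strict inequality.} Once the series bound holds, the inequality $V(m-1,n)>V(m,n)$ for $n\ge m$ follows from positivity of the coefficients of $\frac{1}{(q^2)_\infty}$, except at coefficients where the bound degenerates. For $m\ge 3$, the coefficient of $q^1$ in $\frac{1}{(q^2)_\infty}$ vanishes, which corresponds to $n=m+1$. That case is exactly $V(n-2,n)$ versus $V(n-1,n)$, and I would verify directly that equality holds there: both counts are small and can be enumerated explicitly, e.g.\ $V(n-1,n)=2$ and $V(n-2,n)=2$. I would also verify that every other coefficient of the lower bound is strictly positive.

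\textbf{Main obstacle.} The main obstacle is Step 2, namely making the termwise differences nonnegative uniformly in $k$, since the naive numerator carries a $-q$ term. I would first try shifting $k\mapsto k+1$ in the rank-$m$ series, so that the denominators $(q)_{k}(q)_{k+m}$ and $(q)_{k+1}(q)_{k+m}$ align, and then prove nonnegativity via the identity $\frac{1}{(q)_k}-\frac{1}{(q)_{k-1}}=\frac{q^k}{(q)_k}$. If that fails, the fallback is the contribution $\sum_{c<m}D_{V^c}$ supplied by Theorem \ref{main1}: it is nonnegative, and a sharper version (Proposition \ref{pro320}) could absorb the negative part arising from $c\ge m$.
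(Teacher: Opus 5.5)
\textbf{The target is misnormalized.} Your Step 1 series
\[
\sum_n V(m,n)q^n=\sum_{k\ge0}\frac{q^{2k+m}}{(q)_k(q)_{k+m}(1-q^{2k+m+1})}
\]
is correct, and it already shows that the inequality cannot hold for $q^{-m}D_V(m,q)$. The $k=0$ terms give $V(m-1,m-1)=1$ and $V(m-1,m)=V(m,m)=1$ for $m\ge2$. Hence $q^{-m}D_V(m,q)=q^{-1}+0\cdot q^0+\cdots$, while the right-hand side has constant term $1$. For the same reason $V(m-1,n)>V(m,n)$ fails at $n=m$.

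Your Step 3 enumeration is also wrong. For $n\ge3$ one has $V(n-1,n)=1$ and $V(n-2,n)=3$; for example $V(1,3)=3$ and $V(2,3)=V(3,3)=1$. So the equality $V(n-2,n)=V(n-1,n)$ never occurs; the equality that does occur is $V(n-1,n)=V(n,n)=1$.

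The statement inherits this from a slip in the paper. In the proof of Lemma \ref{lemm34}, summing Lemma \ref{lem31} over $c$ should give the factor $q^{(m+1)c-1}$, not $q^{c}\cdot q^{mc}$. So the series there is $q\sum_nV(m,n)q^n$; it would give $V(0,0)=0$, although the composition $(0)$ has rank $0$. With this corrected, the paper's argument yields
\[
q^{1-m}D_V(m,q)\succeq q^3\mathbf{1}_{m=2}+(1-q^3\mathbf{1}_{m=2})/(q^2)_\infty,
\]
that is, $V(m-1,n)>V(m,n)$ for $n\ge m-1$ except $V(n-1,n)=V(n,n)$. That is the statement you should aim for.

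\textbf{Step 2 has no working mechanism.} Even with the correct normalization, Step 2 states a hope rather than an argument, and its design cannot work.

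First, each $k$-term is a finite product with polynomially growing coefficients. So no ``$k=0$ main term'' can dominate $1/(q^2)_\infty$.

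Second, the fixed-$k$ differences are not nonnegative. In $q^{1-m}D_V(m,q)$ the $k$-th difference is
\[
\frac{q^{2k}}{(q)_k(q^2)_{k+m-1}(1-q^{2k+m})(1-q^{2k+m+1})}-\frac{q^{3k+m}}{(q)_k(q)_{k+m}(1-q^{2k+m})}.
\]
Both are products of $2k+m+1$ factors, and the negative one has the larger growth constant, by the factor $2k+m+1$. For example, with $m=2$ and $k=0$ the difference is $(1-q-q^3)/(q)_3$, whose coefficient of $q^5$ is $-1$. So genuine cancellation across $k$ is required, and you give no way to achieve it.

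The fallback via Theorem \ref{main1} does not help either. The tail $c\ge m$ has genuinely negative coefficients, for example $V^2(1,10)=1<2=V^2(2,10)$, and you give no quantitative way to absorb them.

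\textbf{What the paper does instead.} The paper uses the $q$-binomial form of Lemma \ref{lem31}. It expands $1/(q^{2c})_\infty=\sum_h q^{2ch}/(q)_h$ and sums over $c$ for fixed $(h,i)$. This reduces the problem to a lower bound, for each $h$, on $\bigl(f_{2,1}(q^m,q^{2h})-qg_{2,1}(q^m,q^{2h})\bigr)/(q)_{m-1}$. That bound comes from:
\begin{itemize}
\item two applications of Proposition \ref{propm1};
\item the telescoping relations \eqref{eqo1}--\eqref{eqo2};
\item an explicit regrouping of $f_{4,3}-q^2x^3yg_{6,3}$ that pairs the index $i$ with $i-1$.
\end{itemize}
The resulting bound on the $h$-th summand is essentially a constant (about $1+q^{m-1}$ for $m\ge3$). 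The factor $1/(q^2)_\infty$ in the theorem is simply $\sum_h q^{2h}/(q)_h$, not the contribution of any single term.
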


\begin{remark}
The inequality $V(0,n)\ge V(1,n)$ for any $n\ge 0$ remains open. Perhaps it can be solved by the method proposed in this paper (see the proof of this theorem), but the computation required is rather complicated, so we leave this problem to the interested reader.
\end{remark}

\begin{theorem}\label{main4}Let $m\ge 1$. Then we have
$$q^{-\binom{m}{2}}D_{V_d}(m,q)\succeq \frac{1+q^{1+2m}}{(q)_{m-1}(1-q^{1+m})}+(q^m)_1q^{2m+1}\sum_{ i\ge 0}\frac{ q^{i^2+(m+3)i}}{(q)_{i}(q)_{i+m}}.$$
In particular,
$V_d(m-1,n)>V_d(m,n)$
for all $n\ge \binom{m}{2}$, except that $V_d(0,1)=V_d(1,1)$.
\end{theorem}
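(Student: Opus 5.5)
We need a plan for Theorem \ref{main4}: a lower bound for $D_{V_d}(m,q)$.

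The plan is to work from the rank generating function
$$\mathcal{V}_d(z,q)=\sum_{c\ge 0}q^c(-zq^{c+1})_\infty(-z^{-1}q^{c+1})_\infty$$
and extract the coefficient of $z^m$ explicitly. Expanding each infinite product with Euler's identity $(-zq^{c+1})_\infty=\sum_{j\ge0} z^j q^{\binom{j+1}{2}+cj}/(q)_j$ gives
$$\sum_{n}V_d(m,n)q^n=\sum_{c\ge0}q^c\sum_{i\ge0}\frac{q^{\binom{i+m+1}{2}+\binom{i+1}{2}+c(2i+m)}}{(q)_i(q)_{i+m}}.$$
Summing the geometric series over $c$ yields
$$\sum_{i\ge 0}\frac{q^{i^2+(m+1)i+\binom{m+1}{2}}}{(q)_i(q)_{i+m}(1-q^{2i+m+1})}.$$
The factor $q^{\binom{m}{2}}$ in the statement is then the common minimal power, since $\binom{m}{2}\le\binom{m+1}{2}$ and the $(m-1)$-term starts at $q^{\binom m2}$.

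Next I would form $D_{V_d}(m,q)$ as the difference of the $(m-1)$ and $m$ series, matching the $i$-th term of the $(m-1)$ series with the $i$-th term of the $m$ series. Pulling out common factors, the $(m-1)$ summand is $q^{i^2+mi+\binom m2}/((q)_i(q)_{i+m-1}(1-q^{2i+m}))$. Multiplying numerator and denominator by $(1-q^{i+m})(1-q^{2i+m+1})$ lets both terms be written over $(q)_i(q)_{i+m}(1-q^{2i+m})(1-q^{2i+m+1})$. The numerator of the difference is then
$$(1-q^{i+m})(1-q^{2i+m+1})-q^{i+m}(1-q^{2i+m}).$$
This numerator is not termwise positive, so the key step is a resummation. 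I would write $1-q^{2i+m+1}=(1-q^{2i+m})+q^{2i+m}(1-q)$, or alternatively shift the index ($i\mapsto i+1$ in one of the series) so that each summand telescopes into a positive part plus a remainder, and then absorb the remainder into the next term.

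Concretely, the remainder is of the form $-q^{i+m}/((q)_i(q)_{i+m-1}\cdots)$. I would compare it against the positive term $q^{(i+1)^2+\dots}/((q)_{i+1}(q)_{i+m})$ coming from index $i+1$ and use $1/(q)_i\preceq 1/(q)_{i+1}$. This comparison should leave exactly the tail
$$(q^m)_1q^{2m+1}\sum_{i\ge0}\frac{q^{i^2+(m+3)i}}{(q)_i(q)_{i+m}}$$
appearing in the statement.

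The $i=0$ terms need to be handled separately. They should produce $\frac{1+q^{1+2m}}{(q)_{m-1}(1-q^{1+m})}$, possibly after discarding further nonnegative pieces, which is legitimate since we only need $\succeq$. I expect this bookkeeping — choosing the right splitting so that every leftover piece is visibly a series with nonnegative coefficients (a product of $1/(q)_k$ factors, geometric factors and monomials) — to be the main obstacle. My first attempt would be to expand $1/(1-q^{2i+m})$ as a geometric series and compare $c$-by-$c$ before summing, which amounts to comparing the $c$-th and $(c+1)$-th terms directly.

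For the "in particular" part, the main term $\frac{1}{(q)_{m-1}(1-q^{m+1})}$ has all coefficients at least $1$ when $m\ge2$, since $1/(1-q)$ divides it. So $q^{-\binom m2}D_{V_d}(m,q)$ has positive coefficients for all $n\ge\binom m2$. For $m=1$ the main term is $(1+q^3)/(1-q^2)$, which misses odd powers $q,q^5,\dots$. For those coefficients I would use the second sum, which for $m=1$ contributes $(1-q)q^3\sum_i q^{i^2+4i}/((q)_i(q)_{i+1})$; its $i=0$ term $(1-q)q^3/(1-q)=q^3$ together with higher terms should cover the odd coefficients from $q^3$ on. This leaves only the coefficient of $q^1$, which gives the stated exception $V_d(0,1)=V_d(1,1)$. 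Small cases would be checked directly.
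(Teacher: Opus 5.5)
Your derivation of $\sum_n V_d(m,n)q^n$ is correct; it is the first identity of Proposition~\ref{pth43}. The gap is the heart of the theorem: you never rewrite $D_{V_d}(m,q)$ as a sum of manifestly nonnegative series, and the mechanisms you suggest do not work.

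\emph{The index-shift absorption fails.} After dividing by $q^{\binom{m}{2}}$, the negative part of the index-$i$ term is the whole summand $q^{i^2+(m+1)i+m}/\bigl((q)_i(q)_{i+m}(1-q^{2i+m+1})\bigr)$. Everything coming from index $i+1$ starts at $q^{(i+1)^2+m(i+1)}$, which is $i+1$ degrees higher. So no comparison based on $1/(q)_i\preceq 1/(q)_{i+1}$ can make the index-$(i+1)$ term dominate it. The same-index positive part is not enough either: for $m=1$, $i=0$ the net term is $\frac{1-q-q^2}{(1-q)(1-q^2)}=1-q^3-q^4-2q^5-2q^6-\cdots$.

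\emph{The fallback fails too.} Expanding $1/(1-q^{2i+m})$ and comparing term by term in $c$ is literally comparing $V_d^c(m-1,n)$ with $V_d^c(m,n)$. This can fail once $c\ge m$: for every $c\ge1$, $V_d^c(0,2c+1)=0<1=V_d^c(1,2c+1)$ (the composition $c<c+1$). This is why Theorem~\ref{main1} is restricted to $m>c$.

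\emph{The missing idea} is the difference system of Proposition~\ref{propm1}. With $x=q^m$ one has $q^{-\binom{m}{2}}(q)_{m-1}D_{V_d}(m,q)=f_{1,1}(x,1)-xg_{1,1}(x,1)$. Its first identity, with $a=r=1$, $y=1$, $\alpha=1$, $\beta=-x$, gives
\[
f_{1,1}-xg_{1,1}=(1-x)\bigl(f_{2,2}-x^2q^2g_{4,2}\bigr)+xq\bigl(1-q^{-1}x\bigr)\bigl(g_{2,2}-f_{4,2}\bigr).
\]
Each bracket is then computed to be a sum of series with visibly nonnegative coefficients. The factors $1-q^m$ and $1-q^{m-1}$ are absorbed by $(x)_i$ and $(q)_{m-1}$, and the second bracket drops out when $m=1$. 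This is an index-shifting resummation of the kind you anticipated, but finding it and checking positivity of the brackets is the whole proof.

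Separately, the display cannot be proved for $m=1$, because there it is false. Its right-hand side is $\frac{1+q^3}{1-q^2}+(1-q)q^3\sum_{i\ge0}\frac{q^{i^2+4i}}{(q)_i(q)_{i+1}}$. Its coefficient of $q^3$ is $2$: one from $q^3/(1-q^2)$ and one from the $i=0$ summand $q^3$ you planned to use. But $V_d(0,3)-V_d(1,3)=3-2=1$, which also follows from \eqref{eqd1}. The $q^{1+2m}$ in the first fraction and the $i=0$ summand of the tail are the same contribution counted twice.

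The paper's own derivation makes this double count. For $m\ge2$ it is harmless, since the first discarded sum carrying the factor $1-q^{m-1}$ already contributes $\succeq q^{m+2}/\bigl((q)_{m-2}(1-q)\bigr)\succeq q^{2m+1}/(q)_{m-1}$. For $m=1$ those sums vanish. So for $m=1$ argue directly from \eqref{eqd1}: $D_{V_d}(1,q)\succeq\frac{1}{1-q^2}+\frac{q^3(1+q^5)}{(q^2)_3}\succeq\frac{1}{1-q}-q$. This gives $V_d(0,n)>V_d(1,n)$ for $n\ne1$.

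A small slip: $(1+q^3)/(1-q^2)$ misses only $q^1$, not $q^5$.
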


The rest of the paper is organized as follows. In Section \ref{sec2}, we construct a difference system that will characterize the one variable rank generating functions. In Section \ref{sec3}, we study the monotonicity of $V^c(m,n)$ and $V(m,n)$ with respect to $m\ge 0$. In particular, we give the proof of Theorem \ref{main1} for $D_{V^c}(m,q)$ and Theorem \ref{main2}. In Section \ref{sec4}, we study the monotonicity of $V_d^c(m,n)$ and $V_d(m,n)$ with respect to $m\ge 0$. In particular, we give the proof of Theorem \ref{main1} for $D_{V_d^c}(m,q)$ and Theorem \ref{main4}. 
In Section \ref{sec5}, we close this paper with some remarks on the monotonicity of the Andrews-Garvan-Dyson cranks of integer partitions and another approach to the inequalities for $V(m,n)$ and $V_d(m,n)$.

\section{On a difference system and the underlying idea of the proof}\label{sec2}
In this section, we construct a \emph{difference system} that will characterize the one variable rank generating functions for $V^c(m,n)$, $V(m,n)$, $V_d^c(m,n)$ and $V_d(m,n)$ in the next two sections. Let $x,y$ be any real numbers and let $0<|q|<1$.
Define for any positive integer $a$ and nonnegative integer $r$ that
\begin{align*}
f_{a,r}&:=f_{a,r}(x,y)=\sum_{i\ge 0}\frac{q^{i^2+i(a-1)}x^i}{(q)_i(x)_{i}(xyq^{2i})_r},
\end{align*}
and
\begin{align*}
g_{a,r}&:=g_{a,r}(x,y)=\sum_{i\ge 0}\frac{q^{i^2+ia}x^i}{(q)_i(x)_{i+1}(xyq^{2i+1})_r}.
\end{align*}
Clearly, for any integer $r\ge 1$  we have
\begin{align}\label{eqo1}
f_{a,r}(x,y)-xy f_{a+2,r}(x,y)&=f_{a, r-1}(x, yq),\\ \label{eqo2}
g_{a,r}(x,y)-qxyg_{a+2,r}(x,y)&=g_{a,r-1}(x,qy).
\end{align}
For simplicity, we write
\begin{equation*}
f_{a}:=f_a(x):=f_{a,0}(x,0)\;\;\text{and}\;\; g_{a}:=g_a(x):=g_{a,0}(x,0).
\end{equation*}

The following difference system plays a crucial role in the proof of this paper.
\begin{proposition}\label{propm1}For any $\alpha, \beta$ we have
\begin{align*}
\alpha f_{a,r}+\beta g_{a,r}=&(\alpha+\beta)f_{a+1,r+1}+x(q^a\alpha+\beta) g_{a+1,r+1}\\
&-xy(q^r\alpha+\beta)f_{a+3,r+1}-x^2yq(q^{a} \alpha+q^{r}\beta) g_{a+3,r+1}.
\end{align*}
In particular,
$$
\alpha f_{a}+\beta g_{a}=(\alpha+\beta)f_{a+1}+x(q^a\alpha+\beta) g_{a+1}.$$
\end{proposition}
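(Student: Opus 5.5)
By linearity in $\alpha$ and $\beta$, it suffices to prove the two identities obtained by taking $(\alpha,\beta)=(1,0)$ and $(\alpha,\beta)=(0,1)$:
\begin{align*}
f_{a,r}&=f_{a+1,r+1}+xq^a g_{a+1,r+1}-xyq^r f_{a+3,r+1}-x^2yq^{a+1} g_{a+3,r+1},\\
g_{a,r}&=f_{a+1,r+1}+x g_{a+1,r+1}-xy f_{a+3,r+1}-x^2yq^{r+1} g_{a+3,r+1}.
\end{align*}
The "in particular" statement then follows by setting $r=0$, $y=0$.

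\textbf{Step 1: combine the $f$-terms and the $g$-terms separately.} In $f_{a+1,r+1}-xyq^r f_{a+3,r+1}$, the $i$-th summands share the factor $q^{i^2+ia}x^i/\big((q)_i(x)_i(xyq^{2i})_{r+1}\big)$, multiplied by $1-xyq^{2i+r}$. This last factor cancels the top factor of $(xyq^{2i})_{r+1}$, leaving
$$\frac{q^{i^2+ia}x^i}{(q)_i(x)_i(xyq^{2i})_r}.$$
Likewise, $xq^ag_{a+1,r+1}-x^2yq^{a+1}g_{a+3,r+1}$ collapses termwise to
$$\frac{x^{i+1}q^{i^2+i(a+1)+a}}{(q)_i(x)_{i+1}(xyq^{2i+1})_r}.$$
The analogous collapses hold in the second identity, and there the $g$-parts carry $(xyq^{2i+1})_r$ with a different power of $q$.

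\textbf{Step 2: compare with the left-hand side.} For $f_{a,r}$, subtract the first collapsed sum from $f_{a,r}$ termwise. The difference is
$$\frac{q^{i^2+i(a-1)}x^i}{(q)_{i-1}(x)_i(xyq^{2i})_r},$$
since $1-q^i$ cancels against $(q)_i$. Shifting $i\mapsto i+1$ in the collapsed $g$-sum gives numerators of exactly the same shape $q^{i^2+i(a-1)}x^i/\big((q)_{i-1}(x)_i\big)$. The remaining mismatch is $(xyq^{2i})_r$ versus $(xyq^{2i-1})_r$. When $y=0$ (the "in particular" case) the mismatch vanishes, so the identity is immediate. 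The second identity is handled the same way: $g_{a,r}$ minus the collapsed $f$-sum produces factors $1/(x)_{i+1}-1/(x)_i=xq^i/(x)_{i+1}$, which should match the shifted collapsed $g$-sum.

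\textbf{Step 3: the general $r$ and the main obstacle.} The main obstacle is the $y$-dependent mismatch of the Pochhammer factors for $r\ge1$. For this I would argue by induction on $r$, using \eqref{eqo1} and \eqref{eqo2}. Define $T_a(F):=F_{a,r}(x,y)-xyF_{a+2,r}(x,y)$, with the extra factor $q$ for the $g$-family. Apply $T_a$ to both sides of the desired identity at level $r$. By \eqref{eqo1} and \eqref{eqo2}, each $f_{\cdot,r}$ and $g_{\cdot,r}$ term becomes the corresponding level-$(r-1)$ term at $(x,yq)$. The right-hand coefficients are built exactly so that the powers $q^r$ turn into $q^{r-1}$ under $y\mapsto yq$, which gives the identity at level $r-1$ with $y$ replaced by $yq$. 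The base case $r=0$ is Step 2 with general $y$, and there the mismatch disappears because $(\cdot)_0=1$.

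Since $T_a$ only determines the identity up to its kernel, I would close the induction with a uniqueness argument. Compare coefficients of $y^k$, or view both sides as power series in $y$ and use the recursive structure in $a$. The point is that a quantity $E_a$ with $E_a=xyE_{a+2}$ for all $a$ and $E_a\to0$ as $a\to\infty$ must vanish. I expect checking this decay/uniqueness step, together with the bookkeeping of the $q$-powers under $y\mapsto yq$, to be the only delicate part.
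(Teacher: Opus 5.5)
Your ``main obstacle'' does not exist: it comes from a cancellation error in Step 1. In $xq^a g_{a+1,r+1}-x^2yq^{a+1}g_{a+3,r+1}$ the ratio of the $i$-th summands is $xyq^{2i+1}$. So the factor $1-xyq^{2i+1}$ cancels the \emph{first} factor of $(xyq^{2i+1})_{r+1}$, not the last one. In the $f$-part, by contrast, $1-xyq^{2i+r}$ is the last factor of $(xyq^{2i})_{r+1}$. The collapsed summand is therefore
\[
\frac{q^{i^2+i(a+1)+a}x^{i+1}}{(q)_i(x)_{i+1}(xyq^{2i+2})_r}.
\]
Written in terms of $j=i+1$, it is $q^{j^2+j(a-1)}x^j/\big((q)_{j-1}(x)_j(xyq^{2j})_r\big)$. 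This is exactly the $j$-th term of $f_{a,r}$ minus the collapsed $f$-sum, so there is no $y$-dependent mismatch. Step 2 already proves the first identity for all $r\ge 0$ and all $y$.

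The second identity closes the same way, with no shift:
\begin{itemize}
\item in $f_{a+1,r+1}-xyf_{a+3,r+1}$ the bottom factor $1-xyq^{2i}$ cancels;
\item in $xg_{a+1,r+1}-x^2yq^{r+1}g_{a+3,r+1}$ the top factor $1-xyq^{2i+r+1}$ cancels.
\end{itemize}
Both collapsed sums therefore carry $(xyq^{2i+1})_r$, and $1/(x)_{i+1}-1/(x)_i=xq^i/(x)_{i+1}$ matches the collapsed $g$-sum term by term. Corrected, your Steps 1--2 are the paper's proof read in reverse. The paper splits $1=(1-q^i)+q^i$ in $f_{a,r}$ and $1=(1-xq^i)+xq^i$ in $g_{a,r}$, reindexes the first piece, and reinserts the missing Pochhammer factor.

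Step 3 is therefore unnecessary, and as written it is only a sketch. The relation it needs is true: the level-$r$ identity at $a$ minus $xy$ times the level-$r$ identity at $a+2$ equals the level-$(r-1)$ identity at $(x,yq)$, and similarly with $qxy$ for the $g$-identity. But it does not come from applying \eqref{eqo1} and \eqref{eqo2} term by term, as you describe. Pairing $xq^ag_{a+1,r+1}$ with its $(a+2)$-counterpart gives $xq^a(g_{a+1,r+1}-q^2xy\,g_{a+3,r+1})$, to which \eqref{eqo2} does not apply. The relation only appears after regrouping the $g_{a+3,r+1}$-terms coming from the two instances, and you assert this rather than check it.

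Your uniqueness criterion is also too weak. ``$E_a=xyE_{a+2}$ and $E_a\to 0$'' forces $E_a=0$ only when $|xy|\le 1$. For general $y$ you would argue formally in $y$: $E_a=(xy)^kE_{a+2k}$ is divisible by $y^k$ for every $k$. Fixing the single cancellation in Step 1 makes all of this moot.
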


\begin{proof}
By the basic facts on $q$-shift factorial, we obtain
\begin{align*}
f_{a,r}&=\sum_{i\ge 0}\frac{q^{i^2+i(a-1)}x^i(1-q^i+q^i)}{(q)_i(x)_{i}(xyq^{2i})_r}\\
&=\sum_{i\ge 1}\frac{q^{i^2+i(a-1)}x^i}{(q)_{i-1}(x)_{i}(xyq^{2i})_r}+\sum_{i\ge 0}\frac{q^{i^2+ia}x^i}{(q)_i(x)_{i}(xyq^{2i})_r}\\
&=\sum_{i\ge 0}\frac{q^{i^2+i(a+1)+a}x^{i+1}(1-xyq^{2i+1})}{(q)_i(x)_{i+1}(xyq^{2i+1})_{r+1}}+\sum_{i\ge 0}\frac{q^{i^2+ia}x^i(1-xyq^{2i+r})}{(q)_i(x)_{i}(xyq^{2i})_{r+1}}\\
&=xq^a g_{a+1,r+1}-x^2yq^{a+1}g_{a+3,r+1}+f_{a+1,r+1}-xyq^r f_{a+3,r+1},
\end{align*}
and
\begin{align*}
g_{a,r}&=\sum_{i\ge 0}\frac{q^{i^2+ia}x^i(1-xq^{i}+xq^{i})}{(q)_i(x)_{i+1}(xyq^{2i+1})_r}\\
&=\sum_{i\ge 0}\frac{q^{i^2+ia}x^i}{(q)_i(x)_{i}(xyq^{2i+1})_r}+\sum_{i\ge 0}\frac{q^{i^2+i(a+1)}x^{i+1}}{(q)_i(x)_{i+1}(xyq^{2i+1})_r}\\
&=\sum_{i\ge 0}\frac{q^{i^2+ia}x^i(1-xyq^{2i})}{(q)_i(x)_{i}(xyq^{2i})_{r+1}}+\sum_{i\ge 0}\frac{q^{i^2+i(a+1)}x^{i+1}(1-xyq^{2i+r+1})}{(q)_i(x)_{i+1}(xyq^{2i+1})_{r+1}}\\
&=f_{a+1,r+1}-xyf_{a+3,r+1}+xg_{a+1, r+1}-x^2yq^{r+1}g_{a+3,r+1}.
\end{align*}
Thus the proof is complete by a straightforward way.
\end{proof}

\medskip

We now discuss the ideas underlying our proofs, which is partially inspired by Ji-Zang's proof of the unimodality of the Andrews-Garvan-Dyson's cranks \cite{MR4324846}. Using basic $q$-series identities, we obtain one-variable generating functions for the ranks that we considered. For example (see Proposition \ref{lem22}):
\begin{align*}
q^{-\frac{m(m-1)}{2}-mc}D_{V_d^c}(m,q)=\frac{1}{(q)_{m-1}}f_{1+2c}(q^m) -\frac{q^{m+c}}{(q)_{m-1}}g_{1+2c}(q^m),
\end{align*}
where $f_{1+2c}$ and $g_{1+2c}$ are two $q$-series has nonnegative coefficients defined as in the above, which essentially satisfy a difference system (see Proposition \ref{propm1}).
Note that this is a difference of two terms where each term has nonnegative coefficients. The right-hand side matches the form of the second identity in Proposition \ref{propm1}, i.e., a simple combination of $f_{1+2c}$ and $g_{1+2c}$. Therefore, applying Proposition \ref{propm1} yields:
\begin{align*}
q^{-\frac{m(m-1)}{2}-mc}D_{V_d^{c}}(m,q)=\frac{1-q^{m+c}}{(q)_{m-1}}f_{2c+2}(q^m)+\frac{q^{m+2c+1}(1-q^{m-c-1})}{(q)_{m-1}}g_{2c+2}(q^m).
\end{align*}
When $m\ge c+1$, the right-hand side becomes a sum of two terms, both possessing nonnegative coefficients. Then, the proof follows. In particular, we demonstrate that the two identities in Proposition \ref{propm1} (possibly requiring repeated  applications) are applicable to our rank functions in this paper, either analogously to or generalizing the case of $V_d^{c}(m,n)$ discussed above.

\section{Monotonicity of ranks in concave compositions}\label{sec3}
In this section, we use Proposition \ref{propm1} to study $D_{V^c}(m,q)$ and $D_{V}(m,q)$ for all $m>0$. In particular,
we establish the monotonicity of $V^c(m,n)$ and $V(m,n)$ with respect to $m\ge 0$. Recall that the $q$-binomial theorem states that 
\begin{equation}\label{eqmm0}
\frac{(\alpha t)_\infty}{(t)_\infty}=\sum_{n\ge 0}\frac{(\alpha)_n}{(q)_n}t^n,
\end{equation}
for any $\alpha\in \bc$ and $|t|<1$. Let $\alpha=0$ we obtain the well-known Euler's identity: 
\begin{equation}\label{eqmm1}
\frac{1}{(t)_\infty}=\sum_{n\ge 0}\frac{t^n}{(q)_n}.
\end{equation}
We first prove the following one-variable generating function for $V^c(m,n)$.
\begin{lemma}\label{lem31}
For $m\ge 1$, we have
\begin{align*}
\sum_{n\ge 0}V^{c-1}(m-1,n)q^n&=\frac{q^{mc-1}}{(q^{2c})_\infty}\sum_{i\ge 0}\frac{q^{i^2+i(m+2c-1)}}{(q)_i(q)_{i-1+m}}
\end{align*}
and
\begin{align*}
(q)_{m-1}(q^{2c})_\infty q^{1-mc}D_{V^{c-1}}(m,q)=f_{2c}(q^m)-q^cg_{2c}(q^m).
\end{align*}
\end{lemma}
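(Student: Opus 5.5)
The plan is to extract the coefficient of $z^{m-1}$ from $\mathcal{V}^{c-1}(z,q)=q^{c-1}/\bigl((zq^{c})_\infty(z^{-1}q^{c})_\infty\bigr)$. Expanding both factors by Euler's identity \eqref{eqmm1} gives
$$\mathcal{V}^{c-1}(z,q)=q^{c-1}\sum_{j,k\ge 0}\frac{q^{c(j+k)}z^{j-k}}{(q)_j(q)_k}.$$
Taking $j=k+m-1$ yields
$$\sum_{n}V^{c-1}(m-1,n)q^n=q^{c-1+c(m-1)}\sum_{k\ge0}\frac{q^{c(2k+m-1)}}{(q)_k(q)_{k+m-1}}.$$
This is a single sum without the quadratic exponent $q^{i^2}$. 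To obtain the stated form, I will use the standard identity (Heine/Cauchy type)
$$\sum_{k\ge 0}\frac{t^k}{(q)_k(q)_{k+N}}=\frac{1}{(q)_N(t)_\infty}\sum_{i\ge0}\frac{q^{i^2+iN}t^i}{(q)_i(q)_{i+N}}\cdot (q)_N,$$
that is,
$$\sum_k \frac{t^k}{(q)_k(q)_{k+N}}=\frac{1}{(t)_\infty}\sum_i\frac{q^{i^2+iN}t^i}{(q)_i(q)_{i+N}}.$$
I will prove it by multiplying both sides by $(t)_\infty$, expanding $(t)_\infty$ (or rather $1/(t)_\infty$ on the right) via \eqref{eqmm1} and \eqref{eqmm0}, and comparing coefficients of $t^n$. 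The coefficient comparison reduces to a terminating $q$-Chu–Vandermonde sum. Alternatively, I can cite Cauchy's identity $\sum_i q^{i^2+iN}t^i/((q)_i(q)_{i+N})$ as the $z^N$ coefficient of $(-zq^{1/2}\cdots)$; I would simply invoke the $q$-Chu–Vandermonde route.

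With $t=q^{2c}$ and $N=m-1$, and using $q^{c-1+c(m-1)}q^{c(m-1)}\cdot$ (the $q^{c(m-1)}$ from $t^{k}$ shifted) carefully, the exponent bookkeeping must produce $q^{mc-1}$ in front and $q^{i^2+i(m+2c-1)}$ inside. I need to recheck this: the prefactor is $q^{c-1}\cdot q^{c(m-1)}=q^{mc-1}$, and the remaining sum is $\sum_k q^{2ck}/((q)_k(q)_{k+m-1})$. The identity then gives $\frac{1}{(q^{2c})_\infty}\sum_i q^{i^2+i(m-1)+2ci}/((q)_i(q)_{i+m-1})$, which matches the first claim exactly.

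For the second claim, I write $D_{V^{c-1}}(m,q)$ as the first formula with $m$ minus the first formula with $m+1$:
$$\frac{q^{mc-1}}{(q^{2c})_\infty}\sum_i\frac{q^{i^2+i(m+2c-1)}}{(q)_i(q)_{i+m-1}}-\frac{q^{(m+1)c-1}}{(q^{2c})_\infty}\sum_i\frac{q^{i^2+i(m+2c)}}{(q)_i(q)_{i+m}}.$$
Multiplying by $(q)_{m-1}(q^{2c})_\infty q^{1-mc}$ and using $(q)_{i+m-1}=(q)_{m-1}(q^m)_i$ and $(q)_{i+m}=(q)_{m-1}(q^m)_{i+1}$, the two sums become exactly $f_{2c}(q^m)=\sum_i q^{i^2+i(2c-1)}q^{mi}/((q)_i(q^m)_i)$ and $q^c g_{2c}(q^m)$ with $g_{2c}(x)=\sum_i q^{i^2+2ci}x^i/((q)_i(x)_{i+1})$. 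This comparison is routine.

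The main obstacle is the transformation identity in the second paragraph, including getting the normalization right. I will verify it at small orders before relying on it; the coefficient comparison should reduce to $\sum_{i}\frac{q^{i^2+iN}}{(q)_i(q)_{n-i}(q)_{i+N}}=\frac{1}{(q)_n(q)_{n+N}}$, which is $q$-Vandermonde.
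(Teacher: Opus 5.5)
Your proof is correct, but it reaches the first identity by a different route from the paper.

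\textbf{The paper's route.} The paper never forms the single sum without a quadratic exponent. It writes $\mathcal{V}^{c-1}(z,q)=\frac{q^{c-1}}{(q^{2c})_\infty(z^{-1}q^{c})_\infty}\cdot\frac{(q^{2c})_\infty}{(q^{c}z)_\infty}$. It expands the second factor by the $q$-binomial theorem \eqref{eqmm0} with $\alpha=z^{-1}q^c$ and absorbs $(z^{-1}q^c)_h/(z^{-1}q^c)_\infty=1/(z^{-1}q^{c+h})_\infty$, which it then expands by Euler's identity \eqref{eqmm1}. The prefactor $1/(q^{2c})_\infty$ and the quadratic exponent $i(c+h)$ come straight out of the bivariate expansion, and taking $h=i+m-1$ finishes.

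\textbf{Your route.} You expand both factors by Euler and extract $z^{m-1}$, giving $q^{mc-1}\sum_k q^{2ck}/((q)_k(q)_{k+m-1})$. You then need an extra ingredient: the transformation $\sum_k t^k/((q)_k(q)_{k+N})=(t)_\infty^{-1}\sum_i q^{i^2+iN}t^i/((q)_i(q)_{i+N})$, which is a limiting case ($a,b\to 0$) of Heine's transformation. Your coefficient identity $\sum_i q^{i^2+iN}/((q)_i(q)_{n-i}(q)_{i+N})=1/((q)_n(q)_{n+N})$ is true. Strictly, though, it is the $B\to\infty$ limit of $q$-Vandermonde in the form $\binom{n+B}{n+N}_q=\sum_j\binom{n}{j}_q\binom{B}{j+N}_q q^{j(j+N)}$ (equivalently, a terminating ${}_1\phi_1$ summation), not Vandermonde itself.

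\textbf{What each buys.} The paper's manipulation is in effect a proof of exactly this transformation carried out inside the bivariate series. It is therefore shorter and self-contained, using only \eqref{eqmm0} and \eqref{eqmm1}. Your route isolates a reusable classical transformation. It also exposes a combinatorially transparent intermediate formula: pairs of partitions into $k$ and $k+m-1$ parts, all parts at least $c$.

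\textbf{Minor points.} Your first display double-counts $q^{c(m-1)}$; you catch this later, and the correct form is $q^{c-1}\cdot q^{c(2k+m-1)}=q^{mc-1}q^{2ck}$. Your derivation of the second identity from the first, via $(q)_{i+m-1}=(q)_{m-1}(q^m)_i$ and $(q)_{i+m}=(q)_{m-1}(q^m)_{i+1}$, is the same as the paper's.
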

\begin{proof}
Note that
\begin{align*}
\sum_{n\ge 0}\sum_{m\in\bz}V^{c-1}(m,n)q^n=\frac{q^{c-1}}{(zq^{c})_\infty(z^{-1}q^{c})_\infty}=\frac{q^{c-1}}{(q^{2c})_\infty(z^{-1}q^{c})_\infty}\frac{(q^{2c})_\infty}{(q^{c} z)_\infty}.
\end{align*}
Using $q$-binomial theorem \eqref{eqmm0} and Euler's identity \eqref{eqmm1}, we obtain
\begin{align*}
\sum_{n\ge 0}\sum_{m\in\bz}V^{c-1}(m,n)q^n
&=\frac{q^{c-1}}{(q^{2c})_\infty(z^{-1}q^{c})_\infty}\sum_{h\ge 0}\frac{(z^{-1}q^{c})_h}{(q)_h}(q^{c} z)^h\\
&=\frac{1}{(q^{2c})_\infty}\sum_{h\ge 0}\frac{z^hq^{c-1+hc}}{(q)_h(z^{-1}q^{c+h})_\infty}\\
&=\frac{1}{(q^{2c})_\infty}\sum_{i,h\ge 0}\frac{z^{h-i}q^{c-1+i(c+h)+hc}}{(q)_i(q)_h}.
\end{align*}
Thus, for $m\ge 1$ we have
\begin{align*}
\sum_{n\ge 0}V^{c-1}(m-1,n)q^n&=\frac{q^{mc-1}}{(q^{2c})_\infty}\sum_{i\ge 0}\frac{q^{i^2+i(m+2c-1)}}{(q)_i(q)_{i-1+m}}.
\end{align*}
Moreover, by using the definition of $f_a(x)$ and $g_a(x)$, we obtain
\begin{align*}
D_{V^{c-1}}(m,q)=&\frac{1}{(q^{2c})_\infty}\left(\sum_{i\ge 0}\frac{q^{i^2+i(m+2c-1)+mc-1}}{(q)_i(q)_{i-1+m}}-\sum_{i\ge 0}\frac{q^{i^2+i(m+2c)+(m+1)c-1}}{(q)_i(q)_{i+m}}\right)\\
=&\frac{q^{mc-1}}{(q)_{m-1}(q^{2c})_\infty}\left(\sum_{i\ge 0}\frac{q^{i^2+i(2c-1)}q^{mi}}{(q)_i(q^m)_{i}}-q^{c}\sum_{i\ge 0}\frac{q^{i^2+i(2c)}q^{mi}}{(q)_i(q^m)_{i+1}}\right)\\
=&\frac{q^{mc-1}}{(q)_{m-1}(q^{2c})_\infty}\left(f_{2c}(q^m)-q^cg_{2c}(q^m)\right),
\end{align*}
which completes the proof of the lemma.
\end{proof}
\subsection{Monotonicity of $V^c(m, n)$}
Using Lemma \ref{lem31} and the second identity in Proposition \ref{propm1}, we establish the following identities for $D_{V^{c-1}}(m,q)$, which will directly yields the monotonicity properties of \( V^c(m, n) \) in this paper.
\begin{proposition}\label{pro320}
For $m, c\ge 1$ we have
\begin{align*}
q^{1-mc}D_{V^{c-1}}(m,q)=&\frac{(q^c)_1 (q^{m+c})_1}{(q)_{m-1} (q^{2c})_\infty}f_{2c+2}(q^m)+q^{m+2c+1}\frac{ (q^c)_1 (q^{m-c-1})_1}{(q)_{m-1}(q^{2c})_\infty }g_{2c+2}(q^m).
\end{align*}
In particular, for $m>c\ge 1$ we have
\begin{align}\label{eqmcc1}
q^{1-mc}D_{V^{c-1}}(m,q)\succeq \frac{1}{(q^{2c})_\infty}\left(\frac{(q^c)_1(q^{m+c})_1}{(q)_{m-1}}+\frac{(q^c)_1(q^{m-c-1})_1q^{m+2c+1}}{(q)_m}\right).
\end{align}
For $m=c>1$ we have
\begin{align*}
\frac{D_{V^{c-1}}(c,q)}{q^{c^2-1}(q^c)_1}=&\frac{(q^{3c})_1(q)_1+q(q^{2c-1})_1}{(q^{2c})_\infty(q)_{c-1}}f_{2c+3}(q^c)+q^{3c+2}\frac{(q^{c-2})_1+q^{c-1}(q^{c+1})_1}{(q^{2c})_\infty(q)_{c-1}}g_{2c+3}(q^c).
\end{align*}
In particular, for $m=c>1$ we have
\begin{align}\label{eqmcc2}
q^{1-c^2}D_{V^{c-1}}(c,q)\succeq \frac{(q^c)_1}{(q)_{c-1}(q^{2c+2})_\infty}.
\end{align}
For $m=c=1$ we have
\begin{align*}
D_{V^{0}}(1,q)&=\frac{(q)_1^2(q^4)_1}{(q^2)_\infty}f_5(q)+\frac{q(q)_1^2}{(q^3)_\infty}f_4(q)+\frac{q^8(q)_1}{(q^2)_\infty}g_{4}(q^2).
\end{align*}
In particular, for $m=c=1$ we have
\begin{align}\label{eqmcc3}
D_{V^{0}}(1,q)\succeq \left((q)_1q^6+\frac{(q)_1q^{10}}{(q^2)_1}\right)\frac{1}{(q^2)_\infty}+\left((q)_1+\frac{q^6}{(q)_1}\right)\frac{1}{(q^4)_\infty}.
\end{align}
\end{proposition}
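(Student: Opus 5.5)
The plan is to start from Lemma \ref{lem31}, which gives
$$(q)_{m-1}(q^{2c})_\infty q^{1-mc}D_{V^{c-1}}(m,q)=f_{2c}(q^m)-q^cg_{2c}(q^m).$$
We then apply the second identity of Proposition \ref{propm1} with $a=2c$, $x=q^m$, $\alpha=1$, $\beta=-q^c$. This yields
$$(1-q^c)f_{2c+1}+q^m(q^{2c}-q^c)g_{2c+1}=(q^c)_1\bigl(f_{2c+1}(q^m)-q^{m+c}g_{2c+1}(q^m)\bigr).$$
This is again of the form $f-q^{\gamma}g$, now with $a=2c+1$ and $\gamma=m+c$. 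Applying the identity once more with $a=2c+1$, $\alpha=1$, $\beta=-q^{m+c}$ gives
$$(1-q^{m+c})f_{2c+2}+q^m(q^{2c+1}-q^{m+c})g_{2c+2}=(q^{m+c})_1f_{2c+2}+q^{m+2c+1}(q^{m-c-1})_1g_{2c+2}.$$
This is exactly the first displayed identity.

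For $m>c$, every factor $(q^{c})_1,(q^{m+c})_1,(q^{m-c-1})_1$ is a single factor $1-q^k$ with $k\ge 1$. These factors are absorbed into $1/(q^{2c})_\infty$ or $1/(q)_{m-1}$ only when $k$ lies in the relevant range, so positivity is not yet immediate. Instead, I will argue via the crude bound: $f_{2c+2}(q^m)\succeq 1$ (its $i=0$ term), and $g_{2c+2}(q^m)\succeq 1/(1-q^m)$. Since the prefactors may have negative coefficients, I must make sure the whole product is handled correctly. The plan is to write $\frac{(q^c)_1}{(q^{2c})_\infty}$ as $\prod_{k\ge 2c,\,k\ne c}$, which is a positive series when $c<2c$. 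Similarly $(q^{m+c})_1/(q)_{m-1}$ is kept together with the remaining infinite product, where the factor $1-q^{m+c}$ cancels from $1/(q^{2c})_\infty$ because $m+c\ge 2c$. Likewise $(q^{m-c-1})_1$ cancels against $(q)_{m-1}$ since $1\le m-c-1\le m-1$, provided $m\ge c+2$; the case $m=c+1$ gives the factor $0$. Hence each term has nonnegative coefficients. Dropping higher terms of $f,g$ then gives \eqref{eqmcc1}, and I would interpret the right side of \eqref{eqmcc1} in this cancelled sense.

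For $m=c>1$, the coefficient $(q^{m-c-1})_1=1-q^{-1}$ is negative. Here I iterate once more. Factor $(q^c)_1$, and write the bracket as $(q^{2c})_1f_{2c+2}(q^c)-q^{3c}(q)_1g_{2c+2}(q^c)$. I then apply the general form of Proposition \ref{propm1} with $a=2c+2$, $\alpha=(q^{2c})_1$, $\beta=-q^{3c}(q)_1$. The result has coefficient $\alpha+\beta$ on $f_{2c+3}$ and $q^c(q^{2c+2}\alpha+\beta)$ on $g_{2c+3}$. Simplifying these should give $(q^{3c})_1(q)_1+q(q^{2c-1})_1$ and $q^{2c+2}\bigl((q^{c-2})_1+q^{c-1}(q^{c+1})_1\bigr)$ respectively; checking this algebra is routine. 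The nonnegativity of \eqref{eqmcc2} then uses $f_{2c+3}\succeq1$ and cancellation of $(q^{3c})_1,(q^{2c-1})_1$ etc. against $1/(q^{2c})_\infty$ and $1/(q)_{c-1}$. One catch is that $(q^{c-2})_1$ vanishes or is fine when $c\ge3$, and equals $0$ when $c=2$.

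The case $m=c=1$ is the main obstacle. Here $(q)_0=1$ and the coefficients no longer cancel cleanly, so I expect to apply Proposition \ref{propm1} repeatedly with $x=q$. I would also use the definition of $g_a(q)$ to rewrite it in terms of $f$ at shifted arguments, since $(x)_{i+1}$ with $x=q$ equals $(q)_{i+1}$. This relation is presumably what produces the $g_4(q^2)$ term: by reindexing $i\to i-1$, $f_{a}(q)$ relates to $g$ evaluated at $q^2$. I would compute until all coefficients are products of $(q)_1$ with infinite products starting at $q^2$ or $q^3$, matching the stated decomposition. I would then bound $f_5(q)\succeq 1+\dots$, $f_4(q)\succeq 1+q^{4}/(q)_1^2\cdots$ and $g_4(q^2)\succeq 1/(q^2)_1$ to extract \eqref{eqmcc3}. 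Finding the right sequence of identities there is the hard part; I would try it by matching low-order coefficients.
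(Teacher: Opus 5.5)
Your derivation of the identity for $m,c\ge1$ and of the $m=c>1$ identity follows the paper's route exactly. You iterate the second identity of Proposition~\ref{propm1} with $\beta=-q^c$, then $\beta=-q^{m+c}$, then $\alpha=(q^{2c})_1$, $\beta=-q^{3c}(q)_1$. There is one slip: $q^c(q^{2c+2}\alpha+\beta)=q^{3c+2}\bigl((q^{c-2})_1+q^{c-1}(q^{c+1})_1\bigr)$, not $q^{2c+2}(\cdots)$. The real gaps are in the inequalities and in the case $m=c=1$.

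For \eqref{eqmcc1} your cancellation bookkeeping is wrong. $(q^c)_1/(q^{2c})_\infty$ is not a positive series: its $q^c$-coefficient is $-1$, because $1/(q^{2c})_\infty$ has no terms of degree $1,\dots,2c-1$. So $1-q^c$ must be cancelled against $(q)_{m-1}$, which is where $m>c$ is used. But then, in the $g$-term, $1-q^c$ and $1-q^{m-c-1}$ compete for the same denominator. At $(c,m)=(1,3)$ the prefactor is $q^6(1-q)/\bigl((1-q^2)(q^2)_\infty\bigr)$, whose $q^7$-coefficient is $-1$. The same holds for the whole $g$-term, whose $i\ge1$ part starts at $q^{14}$. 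So ``each term is nonnegative, now drop the higher terms'' fails there. The paper instead shows directly that the tails are nonnegative. For $i\ge1$ it writes $(q)_i=(q)_1(q^2)_{i-1}$, so each tail carries an extra $1/(1-q)$. That factor absorbs $1-q^{m-c-1}$ via $(1-q^k)/(1-q)\succeq0$, while $1-q^c$ is absorbed by $(q)_m$.

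For \eqref{eqmcc2} your sketch does not reach the stated bound, for three reasons.
\begin{enumerate}
\item The factor $(q^{2c-1})_1$ cannot cancel against $(q^{2c})_\infty$ or $(q)_{c-1}$, since $c-1<2c-1<2c$; it must be paired with a factor $1-q$.
\item The outer $(q^c)_1$ is cancelled by $(q^c)_{i}$ inside $f_{2c+3}(q^c)$ (only for $i\ge1$), respectively by $(q^c)_{i+1}$ inside $g_{2c+3}(q^c)$.
\item The right side, with $(q^{2c+2})_\infty$ rather than $(q^{2c})_\infty$, appears only after adding the $i=0$ terms of \emph{both} series. Over $(q)_{c-1}(q^{2c})_\infty$ this gives the numerator $1-q^c-q^{2c}+q^{3c+1}+q^{3c+2}-q^{5c+2}=(q^c)_1(q^{2c})_2+q^{2c+1}(q^{c-1})_1(1+q^{c+1})+q^{5c+1}(q)_1$.
\end{enumerate}

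Finally, the $m=c=1$ identity and \eqref{eqmcc3} are not proved at all. From your $m=c$ formula at $c=1$, $x=q$, you have $D_{V^0}(1,q)=\frac{(q)_1^2}{(q^2)_\infty}\bigl((1+q)f_4-q^3g_4\bigr)$. The missing step is the splitting $(1+q)f_4-q^3g_4=(f_4-q^4g_4)+q(1-q^2)f_4+q^3\bigl(f_4-(1-q)g_4\bigr)$. The first bracket equals $(1-q^4)f_5$ by Proposition~\ref{propm1}, because the $g_5$-coefficient $q(q^4-q^4)$ vanishes. The last bracket is computed termwise: $f_4(q)-(1-q)g_4(q)=\sum_i\frac{(1-q^i)q^{i^2+4i}}{(q)_i(q)_{i+1}}=\sum_{i\ge0}\frac{q^{i^2+6i+5}}{(q)_i(q)_{i+2}}=\frac{q^5}{(q)_1}g_4(q^2)$. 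This is the source of the $g$ at $q^2$ you guessed. Expanding the tails of $f_5(q)$, $f_4(q)$ and $g_4(q^2)$ then yields \eqref{eqmcc3}.
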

\begin{proof}
Recall the second identity in Proposition \ref{propm1} state that
\begin{align}\label{eq31}
\alpha f_{a}+\beta g_{a}=(\alpha+\beta)f_{a+1}+x(q^a\alpha+\beta) g_{a+1}.
\end{align}
With $x:=q^m$, $f_a=f_a(x)$ and $g_a=g_a(x)$, using of Lemma \ref{lem31} and \eqref{eq31} implies
\begin{align*}
(q)_{m-1}(q^{2c})_\infty q^{1-mc}D_{V^{c-1}}(m,q)&=f_{2c}-q^cg_{2c}\\
&=(1-q^c)f_{2c+1}+x(q^{2c}-q^{c})g_{2c+1}\\
&=(1-q^c)\left((1-xq^{c})f_{2c+2}+x(q^{2c+1}-xq^{c})g_{2c+2}\right),
\end{align*}
that is
\begin{align}\label{eqmcc0}
\frac{(q^{2c})_\infty(q)_{m-1} D_{V^{c-1}}(m,q)}{(q^c)_1 q^{mc-1}}=(q^{m+c})_1f_{2c+2}+q^{m+2c+1} (q^{m-c-1})_1g_{2c+2}.
\end{align}
Moreover, by the definition of $f_a(x)$ and $g_a(x)$, for all integers $c\ge 1$ and $m\ge c+1$ we have
\begin{align*}
q^{1-mc}D_{V^{c-1}}(m,q)
=&\frac{(q^c)_1(q^{m+c})_1}{(q^{2c})_\infty(q)_1(q)_{m-1}}\sum_{i\ge 1}\frac{q^{i^2+i(m+2c+1)}}{(q^2)_{i-1}(q^{m})_{i}}\\
&+\frac{(q^c)_1(q^{m-c-1})_1}{(q^{2c})_\infty(q)_1(q)_m}\sum_{i\ge 1}\frac{q^{i^2+i(m+2c+2)+m+2c-1}}{(q^2)_{i-1}(q^{m+1})_{i}}\\
&+\frac{1}{(q^{2c})_\infty}\left(\frac{(q^c)_1(q^{m+c})_1}{(q)_{m-1}}+\frac{(q^c)_1(q^{m-c-1})_1q^{m+2c+1}}{(q)_m}\right).
\end{align*}
Clearly, the coefficients in the power series expansions of the first and second terms on the right-hand side of the identity in the above formula are all non-negative. Therefore, we have
\begin{align*}
q^{1-mc}D_{V^{c-1}}(m,q)\succeq \frac{1}{(q^{2c})_\infty}\left(\frac{(q^c)_1(q^{m+c})_1}{(q)_{m-1}}+\frac{(q^c)_1(q^{m-c-1})_1q^{m+2c+1}}{(q)_m}\right),
\end{align*}
which completes the proof of \eqref{eqmcc1}. 

When $m=c>1$, the preceding expression \eqref{eqmcc0} can be simplified to:
\begin{align}\label{eqdvc}
\frac{(q)_{c-1}(q^{2c})_\infty D_{V^{c-1}}(c,q)}{(1-q^c)q^{c^2-1}}=&(1-q^{2c})f_{2c+2}-q^{3c}(1-q)g_{2c+2}.
\end{align}
In the above identity \eqref{eqdvc}, using \eqref{eq31} again implies
\begin{align*}
\frac{(q)_{c-1}(q^{2c})_\infty D_{V^{c-1}}(c,q)}{(1-q^c)q^{c^2-1}}=(1-q^{2c}-q^{3c}+q^{3c+1})f_{2c+3}+q^{3c+2}(1-q^{c-2}+q^{c-1}-q^{2c})g_{2c+3},
\end{align*}
that is
\begin{align*}
\frac{D_{V^{c-1}}(c,q)}{q^{c^2-1}(q^c)_1}=&\frac{(q^{3c})_1(q)_1+q(q^{2c-1})_1}{(q^{2c})_\infty(q)_{c-1}}f_{2c+3}+q^{3c+2}\frac{(q^{c-2})_1+q^{c-1}(q^{c+1})_1}{(q^{2c})_\infty(q)_{c-1}}g_{2c+3}.
\end{align*}
Moreover, by the definition of $f_a(x)$ and $g_a(x)$, for all integers $m=c>1$ we have
\begin{align*}
q^{1-c^2}D_{V^{c-1}}(c,q)=&\frac{(q^{3c})_1(q)_1+q(q^{2c-1})_1}{(q^{2c})_\infty(q)_{c-1}(q)_1}\sum_{i\ge 1}\frac{q^{i^2+i(3c+2)}}{(q^2)_{i-1}(q^{c+1})_{i-1}}\\
&+\frac{(q^{c-2})_1+q^{c-1}(q^{c+1})_1}{(q^{2c})_\infty(q)_1(q)_{c+1}}q^{3c+2}\sum_{i\ge 1}\frac{q^{i^2+i(3c+3)}}{(q^2)_{i-1}(q^{c+2})_{i-1}}\\
&+\left(\frac{(q^{3c})_1(q)_1(q^c)_1+q(q^{2c-1})_1(q^c)_1}{(q^{2c})_\infty(q)_{c-1}}+\frac{(q^{c-2})_1+q^{c-1}(q^{c+1})_1}{(q^{2c})_\infty(q)_{c-1}}q^{3c+2}\right).
\end{align*}
Clearly, the coefficients in the power series expansions for the first and second terms on the right-hand side of the identity in the above formula are all non-negative. Therefore, we have
\begin{align*}
q^{1-c^2}D_{V^{c-1}}(c,q)\succeq&\frac{(q^{3c})_1(q)_1(q^c)_1+q(q^{2c-1})_1(q^c)_1}{(q^{2c})_\infty(q)_{c-1}}+\frac{(q^{c-2})_1+q^{c-1}(q^{c+1})_1}{(q^{2c})_\infty(q)_{c-1}}q^{3c+2}\\
=& \frac{1-q^c-q^{2c}+q^{3c+1}+q^{3c+2}-q^{5c+2}}{(q)_{c-1}(q^{2c})_\infty}\\
=& \frac{(q^c)_1(q^{2c})_2+q^{2c+1}(q^{c-1})_1(1+q^{c+1})+q^{5c+1}(q)_1}{(q)_{c-1}(q^{2c})_\infty}\\
\succeq &\frac{(q^c)_1}{(q)_{c-1}(q^{2c+2})_\infty},
\end{align*} 
which completes the proof of \eqref{eqmcc2}.
  
When $m=c=1$, by \eqref{eqdvc} we have
\begin{align*}
D_{V^{0}}(1,q)&=\frac{(q)_1^2}{(q^2)_\infty}\left((1+q)f_4-q^3g_4\right)\\
&=\frac{(q)_1^2}{(q^2)_\infty}\left(f_4-q^4g_4+q(1-q^2)f_4+q^3(f_4-(1-q)g_4)\right).
\end{align*}
In the above identity, using \eqref{eq31} implies
\begin{align*}
f_4(q)-q^4g_4(q)&=(1-q^4)f_5(q)+q(q^4-q^4)g_5(q)\\
&=(1-q^4)f_5(q),
\end{align*}
and by the definition of $f_a(x)$ and $g_a(x)$, we obtain
\begin{align*}
f_4(q)-(q)_1g_4(q)&=\sum_{i\ge 0}\left(\frac{q^{i^2+4i}}{(q)_{i}^2}-\frac{(q)_1q^{i^2+5i}}{(q)_{i}(q)_{i+1}}\right)\\
&=\sum_{i\ge 0}\frac{1-q^i}{(q)_{i}(q)_{i+1}}q^{i^2+4i}=\sum_{i\ge 0}\frac{q^{i^2+6i+5}}{(q)_{i}(q)_{i+2}}=\frac{q^5}{(q)_1}g_{4}(q^2).
\end{align*}
Therefore,
\begin{align*}
\frac{(q^2)_\infty }{(q)_1^2} D_{V^{0}}(1,q)&=(q^4)_1f_5(q)+q(q^2)_1f_4(q)+\frac{q^8}{(q)_1}g_{4}(q^2).
\end{align*}
Moreover, by the definition of $f_a(x)$ and $g_a(x)$,  for $m=c=1$ we have
\begin{align*}
 D_{V^{0}}(1,q)=&\left( \frac{(q)_1q^6}{(q^2)_\infty}+\frac{(q)_1q^{10}}{(q^2)_\infty(q^2)_1}+\frac{(q)_1}{(q^4)_\infty}+\frac{q^6}{(q)_1(q^4)_\infty}\right)\\
&+\frac{q^6}{(q^2)_\infty}\sum_{i\ge 1}\frac{q^{i^2+6i}}{(q^2)_{i-1}(q^2)_{i}}+\frac{q^{10}}{(q^2)_\infty}\sum_{i\ge 1}\frac{q^{i^2+7i}}{(q^2)_{i-1}(q^2)_{i+1}}+\frac{1}{(q^4)_\infty(q)_2(q)_1}\sum_{i\ge 2}\frac{q^{i^2+5i}}{(q^2)_{i-2}^2}.
\end{align*}
Clearly, the coefficients in the power series expansions for the last three terms on the right-hand side of the identity in the above formula are all non-negative. Therefore, we have
\begin{align*}
 D_{V^{0}}(1,q)\succeq \frac{(q)_1q^6}{(q^2)_\infty}+\frac{(q)_1q^{10}}{(q^2)_\infty(q^2)_1}+\frac{(q)_1}{(q^4)_\infty}+\frac{q^6}{(q)_1(q^4)_\infty},
\end{align*}
which completes the proof of \eqref{eqmcc3}.  
\end{proof}

\begin{proof}[The proof of Theorem \ref{main1} for $D_{V^{c}}(m,q)$]
By the relation \eqref{eqmcc1} in Proposition \ref{pro320}, for $m>c> 1$ we have
\begin{align*}
q^{1-mc}D_{V^{c-1}}(m,q)&\succeq \frac{1}{(q^{m+c+1})_\infty} \frac{(q^{m+c})_1(q^c)_1}{(q^{2c})_{m-c+1}(q)_{m-1}}+\frac{q^{m+2c+1}}{(q^{2c})_\infty}\frac{(q^c)_1(q^{m-c-1})_1}{(q^2)_{m-1}(q)_1}\\
&\succeq \frac{1}{(q^{m+c+1})_\infty}\succeq 0;
\end{align*}
and by the relation \eqref{eqmcc2} in Proposition \ref{pro320}, for $m=c> 1$ we have
\begin{align*}
q^{1-c^2}D_{V^{c-1}}(c,q)\succeq \frac{(q^c)_1}{(q)_1}\frac{1}{(q^2)_{c-2}(q^{2c+2})_\infty}\succeq \frac{1}{(q^2)_{c-2}(q^{2c+2})_\infty}\succeq 0.
\end{align*}
Thus for all $m\ge c>1$, we have $D_{V^{c-1}}(m,q)\succeq 0$. It remains to prove the cases $m\ge c=1$. By the relation \eqref{eqmcc3} in Proposition \ref{pro320} and Euler's identity \eqref{eqmm1}, for $m=1$ we have
\begin{align*}
D_{V^{0}}(1,q)\succeq &\left((q)_1q^6+\frac{(q)_1q^{10}}{(q^2)_1}\right)\sum_{n\ge 0}\frac{q^{2n}}{(q)_n}+\left((q)_1+\frac{q^6}{(q)_1}\right)\sum_{n\ge 0}\frac{q^{4n}}{(q)_n}\\
\succeq &(q)_1q^6+\frac{(q)_1q^{10}}{(q^2)_1}+(q)_1+\frac{q^6}{(q)_1}\\
=&q^6-q^7+\sum_{n\ge 10}(-1)^{n}q^{n}+1-q+\sum_{n\ge 6}q^n\succeq -q.
\end{align*}
Similarly, by the relation \eqref{eqmcc1} in Proposition \ref{pro320}, for $m\ge 2$ we have
\begin{align*}
D_{V^{0}}(m,q)\succeq&\frac{(q^{m+1})_1q^{m-1}}{(q^{2})_\infty(q^2)_{m-2}}+\frac{(q^{m-2})_1q^{2m+2}}{(q^{2})_\infty(q^2)_{m-1}}.
\end{align*}
Thus if $m\neq 3$ we obtain $D_{V^{0}}(m,q)\succeq 0$. For $m=3$, by further use Euler's identity \eqref{eqmm1}, we obtain
\begin{align*}
q^{-2}D_{V^{0}}(m,q)\succeq&\frac{1+q^2}{(q^{2})_2}\sum_{n\ge 0}\frac{q^{4n}}{(q)_n}+\frac{(1-q)q^{6}}{(q^2)_{2}}\sum_{n\ge 0}\frac{q^{2n}}{(q)_n}\\
\succeq&\frac{1}{(q^{2})_2}+\frac{q^4}{(q^{2})_2(1-q)}+\frac{(1-q)q^{6}}{(q^2)_{2}}\succeq \frac{1}{(q^2)_2}\left(\sum_{n\ge 4}q^n-q^7\right)\succeq 0,
\end{align*}
that is $D_{V^{0}}(3,q)\succeq 0$. Therefore, for all $m\ge c\ge 1$ we have 
$$D_{V^{c-1}}(m,q)\succeq -q{\bf 1}_{(c,m)=(1,1)}.$$ 
In other words, for all integers $m>c\ge 0$ we have $D_{V^{c}}(m,q)\succeq -q{\bf 1}_{(c,m)=(0,1)}$,  which completes the proof of Theorem \ref{main1} for $D_{V^{c}}(m,q)$.
\end{proof}
\subsection{Monotonicity of $V(m, n)$}
We use the first identity in Lemma \ref{lem31} to establish the following identities for $V(m,n)$ and $D_{V}(m,q)$.
\begin{lemma}\label{lemm34}For $m\ge 0$ we have
\begin{align*}
\sum_{n\ge 0}V(m,n)q^n=\sum_{h\ge 0}\frac{q^{2h}}{(q)_h}\sum_{i\ge 0}\frac{q^{i^2+2i+1+(i+1)m}}{(1-q^{2h+2i+1+m})(q)_i(q)_{i+m}}.
\end{align*}
Moreover, for $m\ge 1$ we have
\begin{align*}
q^{-m}(q)_{m-1}D_V(m,q)=\sum_{h\ge 0}\frac{q^{2h}}{(q)_h}\left(f_{2,1}(q^m, q^{2h})-q g_{2,1}(q^m,q^{2h})\right).
\end{align*}
\end{lemma}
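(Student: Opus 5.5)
The plan is to sum the first identity of Lemma \ref{lem31} over the central part and then expand the remaining infinite product with Euler's identity \eqref{eqmm1}. Since $V(m,n)=\sum_{c\ge 0}V^c(m,n)=\sum_{c\ge 1}V^{c-1}(m,n)$, I will apply Lemma \ref{lem31} with $m$ replaced by $m+1$. This gives
\begin{align*}
\sum_{n\ge 0}V(m,n)q^n=\sum_{c\ge 1}\frac{q^{(m+1)c-1}}{(q^{2c})_\infty}\sum_{i\ge 0}\frac{q^{i^2+i(m+2c)}}{(q)_i(q)_{i+m}}.
\end{align*}
By \eqref{eqmm1} with $t=q^{2c}$, I will write $1/(q^{2c})_\infty=\sum_{h\ge 0}q^{2ch}/(q)_h$. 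All terms are power series with nonnegative coefficients, so the order of summation may be exchanged freely.

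Once $h$ and $i$ are fixed, the dependence on $c$ is purely geometric: it is $q^{c(m+1+2h+2i)}$ times a factor independent of $c$. Summing over $c\ge 1$ therefore produces the factor $1/(1-q^{2h+2i+1+m})$, and the remaining powers of $q$ collect into the exponent $2h+i^2+2i+(i+1)m$ up to the fixed shift coming from $q^{-1}$ and the $c=1$ term. I will track this normalization carefully against the stated exponent $i^2+2i+1+(i+1)m$; it is pure bookkeeping and is the only place where a slip could occur.

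For the second identity, I will take the first formula at $m-1$ and at $m$ and subtract them. I will then factor out $(q)_{m-1}$ using $(q)_{i+m-1}=(q)_{m-1}(q^m)_i$ and $(q)_{i+m}=(q)_{m-1}(q^m)_{i+1}$, and set $x=q^m$, $y=q^{2h}$.

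Under this substitution the denominators become $1-q^{2h+2i+m}=1-xyq^{2i}=(xyq^{2i})_1$ for the $m-1$ term and $1-q^{2h+2i+m+1}=(xyq^{2i+1})_1$ for the $m$ term. These are exactly the $r=1$ factors in $f_{a,1}$ and $g_{a,1}$. The $(m-1)$-term should therefore match $\sum_i q^{i^2+i}x^i/((q)_i(x)_i(xyq^{2i})_1)=f_{2,1}(x,y)$. The $m$-term should match $q\sum_i q^{i^2+2i}x^i/((q)_i(x)_{i+1}(xyq^{2i+1})_1)=q\,g_{2,1}(x,y)$. In both cases the common prefactor $q^m$ is pulled out to the left-hand side.

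The main obstacle is purely the exponent matching. I must check that after extracting $q^{m}$ the residual powers in $i$ are exactly $q^{i^2+i(a-1)}x^i$ with $a=2$ for $f$, and $q^{i^2+ia}x^i$ with an extra $q^1$ for $g$. I will check this first on the $i=0$ terms, where it reduces to comparing $q^{m}/(1-q^{2h+m})$ with $q^{m+1}/((1-q^m)(1-q^{2h+m+1}))$.
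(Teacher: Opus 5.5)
Your route is the paper's route: sum the first identity of Lemma \ref{lem31} (with $m\mapsto m+1$) over $c\ge 1$, expand $1/(q^{2c})_\infty$ by \eqref{eqmm1}, sum the geometric series in $c$, then subtract consecutive ranks and recognize $f_{2,1}$ and $g_{2,1}$. However, the step you postpone as ``pure bookkeeping'' does not close. There is no leftover shift: the $q^{-1}$ from Lemma \ref{lem31} exactly cancels the extra $q$ produced by starting the geometric series at $c=1$,
\begin{align*}
\sum_{c\ge 1}q^{(m+1)c-1}\,q^{i^2+i(m+2c)}\,q^{2ch}=q^{i^2+im-1}\,\frac{q^{m+1+2i+2h}}{1-q^{m+1+2i+2h}}=\frac{q^{2h}\,q^{i^2+2i+(i+1)m}}{1-q^{2h+2i+1+m}}.
\end{align*}
So your argument actually proves the first identity with $q^{i^2+2i+(i+1)m}$ in place of $q^{i^2+2i+1+(i+1)m}$. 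The displayed statement is too large by a factor of $q$. For $m=0$ the corrected right-hand side is $1+q+2q^2+3q^3+\cdots$, which matches $V(0,0)=V(0,1)=1$, $V(0,2)=2$, $V(0,3)=3$, while the stated one begins with $q$. More generally $V(m,m)=1$ (central part $0$ and $m$ ones on the right), whereas the stated right-hand side is $O(q^{m+1})$. The paper's proof reaches the stated exponent only because its first display writes the prefactor as $q^{c}\cdot q^{mc}$, silently dropping the $q^{-1}$ carried by Lemma \ref{lem31}.

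Your subtraction step is a correct deduction, and your $i=0$ comparison is consistent with the stated first identity. It therefore inherits the same shift. From the corrected first identity you get
\begin{align*}
q^{1-m}(q)_{m-1}D_V(m,q)=\sum_{h\ge 0}\frac{q^{2h}}{(q)_h}\left(f_{2,1}(q^m,q^{2h})-q\,g_{2,1}(q^m,q^{2h})\right),
\end{align*}
with $q^{1-m}$ rather than $q^{-m}$. For example, when $m=2$ one has $D_V(2,q)=q+O(q^2)$ because $V(1,1)=1$ and $V(2,1)=0$. Hence $q^{-2}(q)_1D_V(2,q)$ is not even a power series, whereas the right-hand side has constant term $1$; with $q^{-1}$ both sides equal $1-q+O(q^2)$. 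So instead of reconciling your computation with the stated exponent, carry it out and correct the statement: as written the lemma is false, and no bookkeeping will prove it. The shift also propagates to the paper's later use of this formula: in fact $V(m,m)=V(m,m+1)=1$, not $V(m,m+1)=V(m,m+2)=1$.
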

\begin{proof}By the first identity in Lemma \ref{lem31}, the definition of $V(m,n)$ and Euler's identity \eqref{eqmm1}, we obtain
\begin{align*}
\sum_{n\ge 0}V(m,n)q^n&=\sum_{c\ge 1}\frac{q^c}{(q^{2c})_\infty}\sum_{i\ge 0}\frac{q^{i^2+i(2c+m)+mc}}{(q)_i(q)_{i+m}}\\
&=\sum_{i\ge 0}\frac{q^{i^2+im}}{(q)_i(q)_{i+m}}\sum_{c\ge 1}q^{(m+1+2i)c}\sum_{h\ge 0}\frac{q^{2ch}}{(q)_h}\\
&=\sum_{h\ge 0}\frac{q^{2h}}{(q)_h}\sum_{i\ge 0}\frac{q^{i^2+2i+1+(i+1)m}}{(1-q^{2h+2i+1+m})(q)_i(q)_{i+m}}.
\end{align*}
This immediately implies
\begin{align*}
q^{-m}(q)_{m-1}D_V(m,q)=\sum_{h\ge 0}\frac{q^{2h}}{(q)_h}\left(\sum_{i\ge 0}\frac{q^{i^2+i}q^{mi}}{(q^mq^{2h+2i})_1(q)_i(q^m)_{i}}-\sum_{i\ge 0}\frac{q^{i^2+2i+1}q^{mi}}{(q^mq^{2h+2i+1})_1(q)_i(q^m)_{i+1}}\right).
\end{align*}
Recall that
$$f_{a,r}(x,y)=\sum_{i\ge 0}\frac{q^{i^2+i(a-1)}x^i}{(q)_i(x)_{i}(xyq^{2i})_r}\;\;\text{and}\;\; g_{a,r}(x,y)=\sum_{i\ge 0}\frac{q^{i^2+ia}x^i}{(q)_i(x)_{i+1}(xyq^{2i+1})_r},$$
we obtain the proof of the lemma.
\end{proof}
We now use the second identity in Proposition \ref{propm1}, that is
\begin{align}\label{eq33}
\alpha f_{a,r}+\beta g_{a,r}=&(\alpha+\beta)f_{a+1,r+1}+x(q^a\alpha+\beta) g_{a+1,r+1}\nonumber\\
&-xy(q^r\alpha+\beta)f_{a+3,r+1}-x^2yq(q^{a} \alpha+q^{r}\beta) g_{a+3,r+1},
\end{align}
to the second identity in Lemma \ref{lemm34}, which will establish the following for $D_{V}(m,q)$.
\begin{proposition}\label{propm2}
For $m\ge 2$ we have
$$q^{-m}D_V(m,q)=\sum_{h\ge 0}\frac{q^{2h}}{(q)_h(q^2)_{m-2}}\left(f_{3,2}(q^m, q^{2h})-q^{m+1}g_{3,2}(q^m, q^{2h})\right),
$$
where
\begin{align*}
&\frac{1}{(q^2)_{m-2}}\left(f_{3,2}(q^m, q^{2h})-q^{m+1}g_{3,2}(q^m, q^{2h})\right)\nonumber\\
=&q^{2}\frac{(q^{m-1})_1}{(q^2)_{m-2}}f_{4,2}(q^m,q^{1+2h})+q^{m+3}\frac{(q^{m-2})_1}{(q^2)_{m-2}}g_{4,2}(q^m,q^{1+2h})+\frac{(q^2)_1}{(q^2)_{m-2}(q^{m+2h})_3}\nonumber\\
&+\frac{1}{(q^2)_{m-2}}\sum_{i\ge 1}\frac{(q^2)_1 q^{i^2+(3+m)i}}{(q)_{i}(q^m)_{i}(q^{m+2h+2i})_2}\left(\frac{q^{m+2h+2i+2}}{ (q^{m+2h+2i+2})_1}+(q^{m-2})_1+q^{m-2}\frac{(q^{i-1+m+2h})_1}{(q^{m+2h+2i-1})_1}\right).
\end{align*}
In particular, for all $m\ge 2$ we have
\begin{align}\label{eqmmmm}
q^{-m}D_V(m,q) \succeq \sum_{h\ge 0}\frac{q^{2h}}{(q)_h(q^2)_{m-2}}\left(\frac{(q^{m-2})_1q^{m+3}}{(q^m)_{1}(q^{m+2h+2})_2}+\frac{(q^{m-1})_1}{(q^{m+2h})_2}+\frac{(q^2)_1 q^{m-1}}{(q^{m+2h})_3}\right).
\end{align}
\end{proposition}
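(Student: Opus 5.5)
The plan is to start from the second identity of Lemma \ref{lemm34} and apply the difference relation \eqref{eq33} once, with $x=q^m$ and $y=q^{2h}$. Take $a=2$, $r=1$, $\alpha=1$, $\beta=-q$. The coefficients of the two correction terms are $q^r\alpha+\beta=q-q=0$ and $q^a\alpha+q^r\beta=q^2-q^2=0$, so $f_{5,2}$ and $g_{5,2}$ drop out. What remains is
$$f_{2,1}-qg_{2,1}=(1-q)f_{3,2}+q^m(q^2-q)g_{3,2}=(1-q)\bigl(f_{3,2}-q^{m+1}g_{3,2}\bigr).$$
Dividing by $(q)_{m-1}=(1-q)(q^2)_{m-2}$, which is legitimate for $m\ge2$, gives the first displayed formula of the proposition.

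For the expansion of $f_{3,2}(q^m,q^{2h})-q^{m+1}g_{3,2}(q^m,q^{2h})$ I would not use \eqref{eq33} again. A second application produces $f_{4,3}$, $g_{4,3}$ and also $f_{6,3}$, $g_{6,3}$, whereas the target contains $f_{4,2}(q^m,q^{1+2h})$ and $g_{4,2}(q^m,q^{1+2h})$, whose denominators $(xyq^{2i+1})_2$ are shifted by one. Instead I would imitate the termwise computation in the proof of Proposition \ref{propm1}:

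1. In the $i$-th summand of $f_{3,2}$, split the numerator as $1=(1-q^i)+q^i$; in the $i$-th summand of $g_{3,2}$, split $1=(1-xq^i)+xq^i$.
2. Shift the index $i\mapsto i+1$ where a factor $(1-q^{i})$ cancels against $(q)_{i}$.
3. Collect the pieces whose denominators match $(q)_i(x)_i(xyq^{2i+1})_2$, resp. $(q)_i(x)_{i+1}(xyq^{2i+2})_2$. These should assemble into $q^2(1-q^{m-1})f_{4,2}(q^m,q^{1+2h})$ and $q^{m+3}(1-q^{m-2})g_{4,2}(q^m,q^{1+2h})$.
4. The mismatched factors $(xyq^{2i})_2$ versus $(xyq^{2i+1})_2$ differ by $(1-xyq^{2i})/(1-xyq^{2i+2})$, which should generate the explicit $i=0$ term $(q^2)_1/(q^{m+2h})_3$ and the residual sum over $i\ge1$ with the three-term bracket.

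This is a finite bookkeeping identity. I would verify it coefficientwise in $i$ after clearing the common denominator $(q)_i(q^m)_{i+1}(q^{m+2h+2i})_3$.

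For the inequality \eqref{eqmmmm}, with $m\ge2$ the factors $(q^{m-1})_1$ and $(q^{m-2})_1$ are nonnegative polynomials times positive series (possibly zero when $m=2$). Hence the $f_{4,2}$ and $g_{4,2}$ terms can be bounded below by their $i=0$ terms, namely $1/(q^{m+2h+1})_2$ and $1/\bigl((1-q^m)(q^{m+2h+2})_2\bigr)$. Each is multiplied by $1/(q^2)_{m-2}$ and then summed against $q^{2h}/(q)_h$.

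The main obstacle is the residual sum over $i\ge1$, because the term $q^{m-2}(q^{i-1+m+2h})_1/(q^{m+2h+2i-1})_1$ is not coefficientwise nonnegative on its own. I would combine it with $(q^{m-2})_1$. Writing $A=i-1+m+2h$ and $B=A+i$,
$$(1-q^{m-2})+q^{m-2}\frac{1-q^{A}}{1-q^{B}}=1-q^{m-2+A}\frac{1-q^{i}}{1-q^{B}}.$$
The troublesome factor $(1-q^{i})$ then cancels against the prefactor $1/(q)_i$, leaving $1/(q)_{i-1}$. The negative piece $-q^{m-2+A}/\bigl((q)_{i-1}(1-q^B)\bigr)$ must then be absorbed by the positive pieces $1/(q)_i$ and $q^{m+2h+2i+2}/(q^{m+2h+2i+2})_1$, using that $1/(q)_i\succeq 1/(q)_{i-1}$ contains the needed shifted powers. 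If this direct absorption fails, I would fall back on the same device as in the proof of \eqref{eqmcc3}: keep part of the $i\ge1$ terms and recombine them with the $i=0$ terms to produce exactly the three summands in \eqref{eqmmmm}, namely $(q^{m-1})_1/(q^{m+2h})_2$ and $(q^2)_1q^{m-1}/(q^{m+2h})_3$, which differ from the naive $i=0$ contributions.
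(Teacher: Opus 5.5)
Your first step is the paper's: \eqref{eq33} with $(a,r)=(2,1)$, $\alpha=1$, $\beta=-q$ kills $f_{5,2},g_{5,2}$ and gives the first formula.

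For the expansion, your reason for not applying \eqref{eq33} again is mistaken, and a second application is exactly what the paper does. With $(a,r)=(3,2)$, $\alpha=1$, $\beta=-xq$, one regroups the output as
\[
q^{2}(q^{-1}x)_1\bigl(f_{4,3}-xyf_{6,3}\bigr)+xq^{3}(q^{-2}x)_1\bigl(g_{4,3}-qxyg_{6,3}\bigr)+(q^2)_1\bigl(f_{4,3}-q^2x^3yg_{6,3}\bigr).
\]
Then \eqref{eqo1} and \eqref{eqo2} turn the first two brackets into $f_{4,2}(x,yq)$ and $g_{4,2}(x,yq)$. This is precisely the shift $y\mapsto yq$ you thought was out of reach.

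Your split-and-shift route can be made to work, but not as your step 3 says. The pieces with matching denominators give only $xq^3g_{4,2}(x,yq)-xqf_{4,2}(x,yq)$. The missing $q^2f_{4,2}(x,yq)-x^2q\,g_{4,2}(x,yq)$ must be extracted from the two mismatched sums, and what is left is $(q^2)_1\bigl(f_{4,3}-q^2x^3yg_{6,3}\bigr)$. You must then evaluate this termwise, shifting $i\mapsto i-1$ in the $g_{6,3}$-sum as the paper does, to get $1/(xy)_3$ and the residual sum.

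The genuine gaps are in \eqref{eqmmmm}.

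\emph{Dropping the $i\ge1$ terms.} $(q^{m-1})_1=1-q^{m-1}$ is not a coefficientwise nonnegative polynomial, so that is not why the $i\ge1$ terms of $f_{4,2}$ and $g_{4,2}$ may be discarded. The correct reason is cancellation:
\begin{itemize}
\item $(q^{m-1})_1/(q^2)_{m-2}=1/(q^2)_{m-3}$ for $m\ge3$;
\item $(q^{m-2})_1/(q^2)_{m-2}\succeq0$ for $m\ge4$, and it vanishes for $m=2$;
\item in the two remaining cases (the factor $1-q$ in the $f$-term at $m=2$, and $1/(1+q)$ in the $g$-term at $m=3$), the offending factor is absorbed by $(q)_i$ with $i\ge1$.
\end{itemize}

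\emph{Reaching the stated right-hand side.} You have not found how to get there. Your belief that its summands ``differ from the naive $i=0$ contributions'' and need borrowing from $i\ge1$ terms is wrong. The $i=0$ terms of $f_{4,2},g_{4,2}$, together with the explicit term $(q^2)_1/(q^{m+2h})_3$, already equal the three summands of \eqref{eqmmmm}, by
\[
\frac{(q^{m-1})_1q^{2}}{(q^{m+2h+1})_2}+\frac{(q^2)_1}{(q^{m+2h})_3}=\frac{(q^{m-1})_1}{(q^{m+2h})_2}+\frac{(q^2)_1q^{m-1}}{(q^{m+2h})_3}.
\]
This is checked by clearing the denominator $(q^{m+2h})_3$. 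Your fallback is therefore unnecessary, and as stated it is not an argument.

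\emph{The residual sum.} You are right that the bracket is not termwise nonnegative; the paper only says ``clearly''. Your rewriting works once the absorption is made precise. Put $B=m+2h+2i-1$ and $D=2m+2h+i-3$. The $i$-th term is a nonnegative factor (for $m=2$, after cancelling $1-q^2$ against $(q^m)_i$) times
\[
\frac{1}{(q)_{i-1}}\Bigl(\frac{1}{(1-q^{i})(1-q^{B+3})}-\frac{q^{D}}{1-q^{B}}\Bigr).
\]
This is $\succeq0$:
\begin{itemize}
\item for $i=1$, because $1/(1-q)$ dominates $q^D/(1-q^B)$ coefficientwise;
\item for $i\ge2$, pull $1/(1-q)$ out of $1/(q)_{i-1}$. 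The coefficient of $q^n$ is then at least $\lfloor n/i\rfloor-\lfloor (n-D)/B\rfloor\ge0$ for $n\ge D$, using $B>i$, and it is positive for $n<D$.
\end{itemize}
The inequality $1/(q)_i\succeq1/(q)_{i-1}$ that you cite is not enough on its own, since $q^D/(1-q^B)$ is an infinite series.
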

\begin{proof}Setting $x:=q^m, y:=q^{2h}$,
$f_{2,1}:=f_{2,1}(q^m, q^{2h})$ and $g_{2,1}:= g_{2,1}(q^m,q^{2h})$.  By use of \eqref{eq33} we obtain
\begin{align*}
f_{2,1}-q g_{2,1}=&(1-q)f_{3,2}+x(q^2-q)g_{3,2}\\
=&(1-q)(f_{3,2}-xqg_{3,2}).
\end{align*}
Thus, by use of \eqref{eq33}  again, we obtain
\begin{align*}
f_{3,2}-xqg_{3,2}=&(1-xq)f_{4,3}+x(q^3-xq)g_{4,3}\\
&-xy(q^2-xq)f_{6,3}-x^2yq(q^3-q^2xq)g_{6,3}\\
=&q^{2}(q^{-1}x)_1\left(f_{4,3}-xyf_{6,3}\right)\\
&+xq^3(q^{-2}x)_1\left(g_{4,3}-qxyg_{6,3}\right)+(q^2)_1\left(f_{4,3}-q^2x^3yg_{6,3}\right).
\end{align*}
Using \eqref{eqo1} and \eqref{eqo2}, we see that
\begin{align*}
f_{4,3}(x,y)-xyf_{6,3}(x,y)&=f_{4,2}(x,yq),\\
g_{4,3}(x,y)-qxyg_{6,3}(x,y)&=g_{4,2}(x,yq).
\end{align*}
Note that
\begin{align*}
f_{4,3}-q^2x^3y g_{6,3}=&\sum_{i\ge 0}\frac{q^{i^2+3i}x^i}{(q)_i(x)_{i}(xyq^{2i})_3}-q^{2}x^3y\sum_{i\ge 0}\frac{q^{i^2+6i}x^{i} }{(q)_i(x)_{i+1}(xyq^{1+2i})_3}\\
=&\frac{1}{(xy)_3}+\sum_{i\ge 1}\frac{q^{i^2+3i}x^i}{(q)_{i}(x)_{i}(xyq^{2i})_3}-y\sum_{i\ge 1}\frac{q^{i^2+4i-3}x^{i+2} }{(q)_{i-1}(x)_{i}(xyq^{2i-1})_3}\\
=&\frac{1}{(xy)_3}+\sum_{i\ge 1}\frac{q^{i^2+3i}x^i}{(q)_{i}(x)_{i}(xyq^{2i})_2}\left(\frac{1}{ (xyq^{2i+2})_1}-\frac{(q^{i})_1 q^{i-3}x^2y}{(xyq^{2i-1})_1}\right),
\end{align*}
and
$$\frac{1}{ (xyq^{2i+2})_1}-\frac{(q^{i})_1 q^{i-3}x^2y}{(xyq^{2i-1})_1}=\frac{xyq^{2i+2}}{ (xyq^{2i+2})_1}+(q^{-2}x)_1+q^{-2}x\frac{(q^{i-1}xy)_1}{(xyq^{2i-1})_1}.$$
Thus, we obtain
\begin{align*}
f_{3,2}-xqg_{3,2}
=&q^{2}(q^{-1}x)_1f_{4,2}(x,yq)+xq^3(q^{-2}x)_1g_{4,2}(x,yq)\\
&+\frac{(q^2)_1}{(xy)_3}+\sum_{i\ge 1}\frac{(q^2)_1 q^{i^2+3i}x^i}{(q)_{i}(x)_{i}(xyq^{2i})_2}\left(\frac{xyq^{2i+2}}{ (xyq^{2i+2})_1}+(q^{-2}x)_1+q^{-2}x\frac{(q^{i-1}xy)_1}{(xyq^{2i-1})_1}\right).
\end{align*}
This immediately implies
\begin{align*}
&\frac{1}{(q^2)_{m-2}}\left(f_{3,2}(q^m, q^{2h})-q^{m+1}g_{3,2}(q^m, q^{2h})\right)\nonumber\\
=&q^{2}\frac{(q^{m-1})_1}{(q^2)_{m-2}}f_{4,2}(q^m,q^{1+2h})+q^{m+3}\frac{(q^{m-2})_1}{(q^2)_{m-2}}g_{4,2}(q^m,q^{1+2h})+\frac{(q^2)_1}{(q^2)_{m-2}(q^{m+2h})_3}\nonumber\\
&+\sum_{i\ge 1}\frac{(q^2)_1 q^{i^2+(3+m)i}}{(q^2)_{m-2} (q)_{i}(q^m)_{i}(q^{m+2h+2i})_2}\left(\frac{q^{m+2h+2i+2}}{ (q^{m+2h+2i+2})_1}+(q^{m-2})_1+q^{m-2}\frac{(q^{i-1+m+2h})_1}{(q^{m+2h+2i-1})_1}\right).
\end{align*}
Clearly, for any $m\ge 2$ and $h\ge 0$, the coefficients in the summation on the right-hand side of the identity in the above formula are all non-negative. Therefore,  by the definition of $f_{a,r}$ and $g_{a,r}$, we obtain
\begin{align*}
&\frac{f_{3,2}(q^m, q^{2h})-q^{m+1}g_{3,2}(q^m, q^{2h})}{(q^2)_{m-2}}\\
\succeq & q^{2}\frac{(q^{m-1})_1}{(q^2)_{m-2}}f_{4,2}(q^m,q^{1+2h})+q^{m+3}\frac{(q^{m-2})_1}{(q^2)_{m-2}}g_{4,2}(q^m,q^{1+2h})+\frac{(q^2)_1}{(q^2)_{m-2}(q^{m+2h})_3} \\ = &\frac{(q^{m-1})_1q^{2}}{(q^2)_{m-2}(q^{m+2h+1})_2}+\frac{(q^{m-2})_1q^{m+3}}{(q^2)_{m-1}(q^{m+2h+2})_2}+\frac{(q^2)_1}{(q^2)_{m-2}(q^{m+2h})_3}\\
&+q^{2}\frac{(q^{m-1})_1}{(q^2)_{m-2}}\sum_{i\ge 1}\frac{q^{i^2+(3+m)i}}{(q)_i(q^m)_{i}(q^{m+1+2h+2i})_2}+q^{m+3}\frac{(q^{m-2})_1}{(q^2)_{m-2}}\sum_{i\ge 1}\frac{q^{i^2+(4+m)i}}{(q)_i(q^m)_{i+1}(q^{m+2+2h+2i})_2}\\
\succeq & \frac{1}{(q^2)_{m-2}}\left(\frac{(q^{m-1})_1q^{2}}{(q^{m+2h+1})_2}+\frac{(q^{m-2})_1q^{m+3}}{(q^m)_{1}(q^{m+2h+2})_2}+\frac{(q^2)_1}{(q^{m+2h})_3}\right).
\end{align*}
Therefore, for $m\ge 2$ we have
\begin{align*}
q^{-m}D_V(m,q)&=\sum_{h\ge 0}\frac{q^{2h}}{(q)_h}\frac{f_{3,2}(q^m, q^{2h})-q^{m+1}g_{3,2}(q^m, q^{2h})}{(q^2)_{m-2}}\\
&\succeq \sum_{h\ge 0}\frac{q^{2h}}{(q)_h(q^2)_{m-2}}\left(\frac{(q^{m-1})_1q^{2}}{(q^{m+2h+1})_2}+\frac{(q^{m-2})_1q^{m+3}}{(q^m)_{1}(q^{m+2h+2})_2}+\frac{(q^2)_1}{(q^{m+2h})_3}\right).
\end{align*}
Note that 
$$\frac{(q^{m-1})_1q^{2}}{(q^{m+2h+1})_2}+\frac{(q^2)_1}{(q^{m+2h})_3}=\frac{(q^{m-1})_1}{(q^{m+2h})_2}+\frac{(q^2)_1 q^{m-1}}{(q^{m+2h})_3},$$
we complete the proof of the proposition.
\end{proof}
We now use the relation \eqref{eqmmmm} in Proposition \ref{propm2} to give the proof of Theorem \ref{main2}.
\begin{proof}[The proof of Theorem \ref{main2}]
For $m\ge 2$, by use of \eqref{eqmmmm} in Proposition \ref{propm2} we obtain
\begin{align*}
q^{-m}D_V(m,q)\succeq &\sum_{h\ge 0}\frac{q^{2h}}{(q)_h}\left(\frac{(q^{m-2})_1q^{m+3}}{(q^2)_{m-1}(q^{m+2h+2})_2}+\frac{(q^{m-1})_1}{(q^2)_{m-2}(q^{m+2h})_2}+\frac{(q^2)_1 q^{m-1}}{(q^2)_{m-2}(q^{m+2h})_3}\right).
\end{align*}
Thus for $m=2$, we obtain 
\begin{align*}
q^{-2}D_V(2,q)\succeq &\sum_{h\ge 0}\frac{q^{2h}}{(q)_h}\left(\frac{(q)_1}{(q^{2+2h})_2}+\frac{(q^2)_1 q}{(q^{1+2h})_3}\right)\\
\succeq &\frac{(q)_1}{(q^{2})_2}+\frac{(q^2)_1 q}{(q^{2})_3}+\frac{q^{2}}{(q)_1}\left(\frac{(q)_1}{(q^{4})_2}+\frac{(q^2)_1 q}{(q^{3})_3}\right)+\sum_{h\ge 2}\frac{q^{2h}}{(q)_h}\left((q)_1+(q^2)_1 q\right)\\
=&\frac{1}{(q^2)_1}+\frac{q^5}{(q^3)_2}+\frac{q^2}{(q^4)_2}+\frac{(1+q)q^3}{(q^3)_3}+(1-q^3)\left(\frac{1}{(q^2)_\infty}-1-\frac{q^2}{1-q}\right)\\
= & \frac{q^6}{(q^2)_1}+\frac{q^5}{(q^3)_2}+\frac{q^2}{(q^4)_2}+\frac{(1+q)q^3}{(q^3)_3}+\frac{1-q^3}{(q^2)_\infty}\\
\succeq & q^3+(1-q^3)\frac{1}{(q^2)_\infty}.
\end{align*}
For $m\ge 3$, we obtain 
\begin{align*}
q^{-m}D_V(m,q)\succeq &\frac{(q^{m-2})_1q^{m+3}}{(q^2)_{m-1}(q^{m+2})_2}+\sum_{h\ge 0}\frac{q^{2h}}{(q)_h}\left(\frac{1}{(q^2)_{m-3}(q^{m+2h})_2}+\frac{ q^{m-1}}{(q^3)_{m-3}(q^{m+2h})_3}\right)\\
\succeq &\frac{q^{m-2}-q^{2m+1}}{(q^2)_{m-1}(q^{m+2})_2}+(1+q^{m-1})\sum_{h\ge 0}\frac{q^{2h}}{(q)_h}\\
=&\frac{q^{m-2}}{(q^2)_{m-1}(q^{m+2})_2}+\frac{1}{(q^2)_\infty}+\frac{q^{m-1}}{(q^2)_\infty}-\frac{q^{2m+1}}{(q^2)_{m-1}(q^{m+2})_2}.
\end{align*}
Therefore, by noting that
\begin{align*}
\frac{q^{m-1}}{(q^2)_\infty}-\frac{q^{2m+1}}{(q^2)_{m-1}(q^{m+2})_2}\succeq &\frac{q^{m-1}}{(q^2)_{m-1}(q^{m+2})_2}-\frac{q^{2m+1}}{(q^2)_{m-1}(q^{m+2})_2}\\
=&\frac{q^{m-1}}{(q^2)_{m-1}(q^{m+3})_1}\succeq 0,
\end{align*}
we obtain 
$$q^{-m}D_V(m,q)\succeq q^3{\bf 1}_{m=2}+(1-q^3{\bf 1}_{m=2})\frac{1}{(q^2)_\infty}\succeq 1+\sum_{n\ge 2}q^2,$$
holds for any integer $m\ge 2$. This immediately yields $V(m-1,n)> V(m,n)$
for all $1< m\le n$, except that $V(n-2,n)\ge V(n-1,n)$. On the other hand, by the first identity in Lemma \ref{lem31}, we have
\begin{align*}
\sum_{n\ge 0}V(m,n)q^n=&\sum_{h\ge 0}\frac{q^{2h}}{(q)_h}\sum_{i\ge 0}\frac{q^{i^2+2i+1+(i+1)m}}{(1-q^{2h+2i+1+m})(q)_i(q)_{i+m}}\\
=&\sum_{i\ge 0}\frac{q^{i^2+2i+1+(i+1)m}}{(1-q^{2i+1+m})(q)_i(q)_{i+m}}+O(q^{3+m})\\
&=\frac{q^{1+m}}{(1-q^{1+m})(q)_{m}}+O(q^{3+m})=q^{1+m}+q^{2+m}+O(q^{m+3}).
\end{align*}
Thus for any $m\ge 0$, we have $V(m,m+1)=V(m,m+2)=1$, that is 
$$V(n-2,n)-V(n-1,n)=1-1=0,$$
holds for any $n\ge 2$, which completes the proof. 
\end{proof}
\section{Monotonicity of ranks in strongly concave compositions}\label{sec4}
In this section, we use Proposition \ref{propm1} to study the monotonicity of $V_d^c(m,n)$ and $V_d(m,n)$ with respect to $m\ge 0$. We begin with an one-variable generating function for $V_d^c(m,n)$.
\begin{proposition}\label{lem22}
For any $m\ge 0$,
\begin{align*}
\sum_{n\ge 0}V_d^c(m,n)q^n=q^{\frac{m(m+1)}{2}}\sum_{ i\ge 0}\frac{q^{i^2+(m+2c+1)i+ (1+m)c}}{(q)_{i}(q)_{i+m}}.
\end{align*}
Moreover, for $m\ge 1$ we have
\begin{align*}
(q)_{m-1}q^{-\binom{m}{2}-mc}D_{V_d^c}(m,q)=f_{1+2c}(q^m) -q^{m+c}g_{1+2c}(q^m).
\end{align*}
\end{proposition}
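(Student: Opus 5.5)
We follow the template of Lemma \ref{lem31}, now for the product generating function $\mathcal{V}_d^c(z,q)=q^c(-zq^{c+1})_\infty(-z^{-1}q^{c+1})_\infty$, and extract the coefficient of $z^m$ directly.

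\textbf{Step 1: expand both products.} By Euler's identity $(-t)_\infty=\sum_{k\ge0}q^{\binom{k}{2}}t^k/(q)_k$ (the $\alpha\to\infty$ limit of the $q$-binomial theorem \eqref{eqmm0}), we get
\[
\mathcal{V}_d^c(z,q)=q^c\sum_{j,i\ge 0}\frac{q^{\binom{j}{2}+j(c+1)}z^j}{(q)_j}\cdot\frac{q^{\binom{i}{2}+i(c+1)}z^{-i}}{(q)_i}.
\]
Combinatorially, $j$ and $i$ are the numbers of parts on the right and left, and these parts are distinct and exceed $c$.

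\textbf{Step 2: extract the coefficient of $z^m$.} Set $j=i+m$ with $m\ge0$. The exponent of $q$ is
\[
c+\binom{i+m}{2}+(i+m)(c+1)+\binom{i}{2}+i(c+1).
\]
Using $\binom{i+m}{2}+\binom{i}{2}=i^2-i+im+\binom{m}{2}$ and $\binom{m}{2}+m=\binom{m+1}{2}$, this equals
\[
\frac{m(m+1)}{2}+i^2+(m+2c+1)i+(m+1)c.
\]
This gives the first formula.

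\textbf{Step 3: form the difference.} For $m\ge1$, take the first formula at $m-1$ and at $m$ and subtract. The $(m-1)$-term has exponent
\[
\binom{m}{2}+i^2+(m+2c)i+mc
\]
with denominator $(q)_i(q)_{i+m-1}$. The $m$-term has exponent
\[
\binom{m}{2}+m+i^2+(m+2c+1)i+mc+c
\]
with denominator $(q)_i(q)_{i+m}$.

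Factor out $q^{\binom{m}{2}+mc}/(q)_{m-1}$, using $(q)_{i+m-1}=(q)_{m-1}(q^m)_i$ and $(q)_{i+m}=(q)_{m-1}(q^m)_{i+1}$. The first sum becomes
\[
\sum_i \frac{q^{i^2+2ci}(q^m)^i}{(q)_i(q^m)_i}=f_{1+2c}(q^m),
\]
since $a-1=2c$ when $a=1+2c$. The second sum becomes
\[
q^{m+c}\sum_i\frac{q^{i^2+(2c+1)i}(q^m)^i}{(q)_i(q^m)_{i+1}}=q^{m+c}g_{1+2c}(q^m),
\]
since here the exponent $ia$ has $a=1+2c$.

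\textbf{Main obstacle.} The only delicate part is the bookkeeping of the quadratic exponents. In particular, the $q$-powers must split exactly into $q^{\binom{m}{2}+mc}$ times the $x^i=q^{mi}$ factors of $f$ and $g$, and the $(q)$-factorials must be rewritten consistently as $(q)_{m-1}(q^m)_i$ and $(q)_{m-1}(q^m)_{i+1}$.
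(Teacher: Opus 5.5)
Your proposal is correct and follows essentially the same route as the paper. Both expand the two infinite products by Euler's identity, keep the terms whose index difference is $m$, subtract consecutive cases, and use $(q)_{i+m-1}=(q)_{m-1}(q^m)_i$ and $(q)_{i+m}=(q)_{m-1}(q^m)_{i+1}$ to read off $f_{1+2c}(q^m)-q^{m+c}g_{1+2c}(q^m)$; your exponent bookkeeping checks out at every step.
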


\begin{proof}Using the well--known Euler identity, that is
$$(-x;q)_\infty=\sum_{\ell\ge 0}\frac{q^{\frac{\ell(\ell-1)}{2}}x^\ell}{(q)_\ell},$$
we have
\begin{align*}
\sum_{n\ge 0}\sum_{m\in\bz}V_d^c(m,n)z^{m}q^{n-c}&=(-zq^{c+1};q)_\infty (-z^{-1}q^{c+1};q)_\infty\\
&=\sum_{\ell_1, \ell_2\ge 0}\frac{q^{\frac{\ell_1(\ell_1+1)}{2}+\frac{\ell_2(\ell_2+1)}{2}+c(\ell_1+\ell_2)}z^{\ell_1-\ell_2}}{(q)_{\ell_1}(q)_{\ell_2}}.
\end{align*}
This immediately implies
\begin{align*}
\sum_{n\ge 0}V_d^c(m,n)q^n=\sum_{\substack{\ell_1, \ell_2\ge 0\\ \ell_1-\ell_2=m}}\frac{q^{(\ell_1+\ell_2+1)c+\frac{\ell_1(\ell_1+1)}{2}+\frac{\ell_2(\ell_2+1)}{2}}}{(q)_{\ell_1}(q)_{\ell_2}}=q^{\frac{m(m+1)}{2}}\sum_{ i\ge 0}\frac{q^{i^2+(m+2c+1)i+ (1+m)c}}{(q)_{i}(q)_{i+m}},
\end{align*}
for all $m\ge 0$. Thus,
\begin{align*}
D_{V_d^c}(m,q)=&q^{\frac{m(m-1)}{2}}\sum_{ i\ge 0}\frac{q^{i^2+(m+2c)i+ mc}}{(q)_{i}(q)_{i-1+m}}-q^{\frac{m(m+1)}{2}}\sum_{ i\ge 0}\frac{q^{i^2+(m+2c+1)i+ (1+m)c}}{(q)_{i}(q)_{i+m}}\\
=&\frac{q^{\frac{m(m-1)}{2}+mc}}{(q)_{m-1}}\left(\sum_{ i\ge 0}\frac{q^{i^2+2ci}q^{mi}}{(q)_{i}(q^m)_{i}}-q^{m+c}\sum_{ i\ge 0}\frac{q^{i^2+(2c+1)i}q^{mi}}{(q)_{i}(q^m)_{i+1}}\right)\\
=&\frac{q^{\frac{m(m-1)}{2}+mc}}{(q)_{m-1}}\left(f_{1+2c}(q^m) -q^{m+c}g_{1+2c}(q^m)\right).
\end{align*}
This complete the proof.
\end{proof}
\subsection{Monotonicity of $V_d^c(m, n)$}
Using Lemma \ref{lem31} and the first identity in Proposition \ref{propm1}, we establish the following identities for $D_{V_d^{c-1}}(m,q)$, which will directly yield the monotonicity properties of \( V_d^c(m, n) \) in this paper.
\begin{proposition}\label{pro32}
For $c, m\ge 1$ we have
\begin{align*}
q^{-\binom{m}{2}-m(c-1)}D_{{ V_d^{c-1}}}(m, q)=\frac{(q^{m+c-1})_1}{(q)_{m-1}}f_{2c}(q^m)+q^{m+2c-1}\frac{(q^{m-c})_1}{(q)_{m-1}}g_{2c}(q^m).
\end{align*}
In particular, for $m> c\ge 0$ we have
\begin{align*}
q^{-\binom{m}{2}-mc}D_{{ V_d^{c}}}(m, q)
\succeq &\frac{(q^{m+c})_1}{(q)_{m-1}}+q^{m+2c+1}\frac{(q^{m-c-1})_1}{(q)_{m}}.
\end{align*}
\end{proposition}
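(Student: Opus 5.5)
We start from Proposition \ref{lem22} with $c$ replaced by $c-1$. This gives
$$(q)_{m-1}q^{-\binom{m}{2}-m(c-1)}D_{V_d^{c-1}}(m,q)=f_{2c-1}(q^m)-q^{m+c-1}g_{2c-1}(q^m).$$
We then apply the second identity of Proposition \ref{propm1} once, with $a=2c-1$, $x=q^m$, $\alpha=1$ and $\beta=-q^{m+c-1}$. This yields
$$f_{2c-1}-q^{m+c-1}g_{2c-1}=(1-q^{m+c-1})f_{2c}+q^m\bigl(q^{2c-1}-q^{m+c-1}\bigr)g_{2c}.$$
The coefficient of $g_{2c}$ simplifies as $q^m(q^{2c-1}-q^{m+c-1})=q^{m+2c-1}(1-q^{m-c})=q^{m+2c-1}(q^{m-c})_1$. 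Dividing by $(q)_{m-1}$ gives the stated identity. This step is purely mechanical.

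For the inequality, we shift $c\mapsto c+1$, so that the hypothesis $m>c\ge 0$ becomes $m\ge c+1$ for the new index. Then
$$q^{-\binom{m}{2}-mc}D_{V_d^{c}}(m,q)=\frac{(q^{m+c})_1}{(q)_{m-1}}f_{2c+2}(q^m)+q^{m+2c+1}\frac{(q^{m-c-1})_1}{(q)_{m-1}}g_{2c+2}(q^m).$$
Since $m-c-1\ge 0$, both prefactors contain a single factor of the form $1-q^k$ with $k\ge 0$. The difficulty is that these factors have negative coefficients, so positivity of $f$ and $g$ alone is not enough.

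The way around this is to separate out the $i=0$ terms. Writing $f_{2c+2}(q^m)=1+\sum_{i\ge1}\frac{q^{i^2+(2c+1)i+mi}}{(q)_i(q^m)_i}$, the quotient $\frac{(q^{m+c})_1}{(q)_{m-1}(q^m)_i}$ for $i\ge 1$ cancels the factor $1-q^{m+c}$ against $(q^m)_i$ when $c<i$. In general this cancellation is not available, so instead we note that $(q)_{m-1}(q^m)_i=(q)_{m+i-1}$ and that this contains the factor $(1-q^{m+c})$ whenever $i\ge c+1$.

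A cleaner route, mirroring the proof of \eqref{eqmcc1}, is to use the $1-q^i$ trick in the form $\frac{1}{(q)_i}=\frac{1}{(q)_1(q^2)_{i-1}}$. This rewrites
$$\frac{(q^{m+c})_1}{(q)_{m-1}}\cdot\frac{1}{(q)_i(q^m)_i}=\frac{(q^{m+c})_1}{(q)_1(q)_{m-1}}\cdot\frac{1}{(q^2)_{i-1}(q^m)_i}.$$
Here $(q)_1(q)_{m-1}(q^m)_i$ contains the distinct factors $1-q$ and $1-q^{m+c}$; the latter occurs when $i\ge c+1$, and otherwise we use $(q)_{m-1}$. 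If needed, we factor instead as $(1-q^{m+c})/(1-q)=1+q+\dots+q^{m+c-1}$, which requires $1-q$ to be available separately from $(q)_{m-1}$; this is exactly what the extra $(q)_1$ supplies. The $g$-part is treated the same way: its $i=0$ term is $q^{m+2c+1}\frac{(q^{m-c-1})_1}{(q)_{m-1}(1-q^m)}=q^{m+2c+1}\frac{(q^{m-c-1})_1}{(q)_m}$, and for the $i\ge1$ terms $(1-q^{m-c-1})/(1-q)$ is again a polynomial with nonnegative coefficients.

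The main obstacle is therefore showing that the $i\ge 1$ tails are $\succeq 0$. We handle it exactly as in the proof of \eqref{eqmcc1}, using the extra factor $1-q$ coming from $(q)_i$ to absorb the numerator $1-q^k$ via $\frac{1-q^k}{1-q}\succeq 0$. What remains are the $i=0$ terms, which give the claimed lower bound.
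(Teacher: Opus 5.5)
Your proposal is correct and follows the paper's proof. You apply the second identity of Proposition~\ref{propm1} once to $f_{2c-1}(q^m)-q^{m+c-1}g_{2c-1}(q^m)$, then keep the $i=0$ terms and drop the $i\ge 1$ tails. You justify dropping the tails more explicitly than the paper does: the numerator $1-q^k$ ($k\ge 0$) is absorbed into the factor $1-q$ of $(q)_i$. The intermediate detour about cancelling $1-q^{m+c}$ against $(q^m)_i$ when $i\ge c+1$ is unnecessary and can be dropped.
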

\begin{proof}
Recall the second identity in Proposition \ref{propm1} that
$$
\alpha f_{a}+\beta g_{a}=(\alpha+\beta)f_{a+1}+x(q^a\alpha+\beta) g_{a+1},$$
and the using of Lemma \ref{lem22} implies
\begin{align*}
(q)_{m-1}q^{-\frac{m(m-1)}{2}-m(c-1)} D_{V_d^{c-1}}(m,q)&=f_{2c-1}(q^m) -q^{m+c-1}g_{2c-1}(q^m)\\
&=(1-q^{m+c-1})f_{2c}(q^m)+q^{m}(q^{2c-1}-q^{m+c-1})g_{2c}(q^m).
\end{align*}
Therefore, by using the definition of $f_{a}$ and $g_{a}$, we obtain
\begin{align*}
q^{-\binom{m}{2}-m(c-1)}D_{{ V_d^{c-1}}}(m, q)=&\frac{(q^{m+c-1})_1}{(q)_{m-1}}\sum_{i\ge 0}\frac{q^{i^2+(2c-1)i+mi}}{(q)_i(q^m)_{i}}+q^{m+2c-1}\frac{(q^{m-c})_1}{(q)_{m}}\sum_{i\ge 0}\frac{q^{i^2+2ci+mi}}{(q)_i(q^{m+1})_{i}}.
\end{align*}
Moreover, for $m\ge c\ge 1$ we have
\begin{align*}
q^{-\binom{m}{2}-m(c-1)}D_{{ V_d^{c-1}}}(m, q)
\succeq &\frac{(q^{m+c-1})_1}{(q)_{m-1}}+q^{m+2c-1}\frac{(q^{m-c})_1}{(q)_{m}}.
\end{align*}
Letting $c\mapsto c+1$ we complete the proof of the proposition.
\end{proof}

\begin{proof}[The proof of Theorem \ref{main1} for $D_{V_d^{c}}(m,q)$]
For all $c\ge 0$ and $m\ge \max(2, c+1)\ge 2$, by Proposition \ref{pro32}, it is clear that
\begin{align*}
q^{-\binom{m}{2}-mc}D_{{ V_d^{c}}}(m, q)
\succeq &\frac{(q^{m+c})_1}{(q)_1(q^2)_{m-2}}+q^{m+2c+1}\frac{(q^{m-c-1})_1}{(q)_{m}}\succeq \frac{1}{(q^2)_{m-2}}\succeq 0.
\end{align*}
It remains to prove the case $(c, m)=(0,1)$. By Proposition \ref{pro32},
\begin{align*}
D_{{ V_d^{0}}}(1, q)\succeq 1-q\succeq -q.
\end{align*}
Thus $D_{V_d^{c}}(m,q)\succeq -q{\bf 1}_{(c,m)=(0,1)}$, which complete the proof of Theorem \ref{main1} for $D_{V_d^{c}}(m,q)$.
\end{proof}

\subsection{Monotonicity of $V_d(m, n)$}We use the first identity in Lemma \ref{pro32} to establish the following identities for $V(m,n)$ and $D_{V}(m,q)$.
\begin{proposition}\label{pth43}
For all $m\ge 0$ we have
\begin{align*}
\sum_{n\ge 0}V_d(m,n)q^n=q^{\frac{m(m+1)}{2}}\sum_{ i\ge 0}\frac{q^{i^2+(m+1)i}}{(q)_{i}(q)_{i+m}(1-q^{2i+1+m})}.
\end{align*}
For $m\ge 1$ and with $x=q^m$, we have
\begin{align*}
&q^{-{m(m-1)}/{2}}D_{V_d}(m,q)\\
=&\frac{1}{(q)_{m-1}(xq)_1}+\frac{1}{(q)_{m-1}}\sum_{ i\ge 1}\frac{q^{i^2+i-1}x^{i+1}(1+xq^{3i+1})}{(q)_{i-1}(xq)_{i-1}(xq^{2i-1})_3}\\
&+\frac{(q^{-1}x)_1}{(q)_{m-1}}\left(\sum_{ i\ge 1}\frac{q^{i^2+i}x^{i}}{(q)_{i}(xq)_{i-1}(xq^{2i-1})_3}+\sum_{ i\ge 1}\frac{q^{i^2+2i+1}x^{i+1}}{(q)_{i-1}(x)_{i+1}(xq^{2i})_3}+\sum_{ i\ge 0}\frac{q^{i^2+5i+3}x^{i+2}}{(q)_{i}(x)_{i}(xq^{2i})_3}\right).
\end{align*}
In particular,
\begin{align}\label{eqd1}
D_{V_d}(1,q)=\frac{1}{1-q^2}+(1-q)q^3\sum_{ i\ge 0}\frac{q^{i^2+4i}(1+q^{3i+5})}{(q)_{i}(q)_{i+1}(q^{2+2i})_3}.
\end{align}
\end{proposition}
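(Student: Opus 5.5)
The plan is to derive the generating function for $V_d(m,n)$ by summing Proposition \ref{lem22} over $c\ge 0$. For fixed $i$, the $c$-dependence is $q^{(2i+1+m)c}$, and the geometric sum gives $1/(1-q^{2i+1+m})$. This yields the first identity at once. Taking the difference of the $m-1$ and $m$ versions and factoring out $q^{\binom{m}{2}}/(q)_{m-1}$ with $x=q^m$ gives
\begin{align*}
(q)_{m-1}q^{-\binom{m}{2}}D_{V_d}(m,q)=\sum_{i\ge 0}\frac{q^{i^2+i}x^i}{(q)_i(x)_i(xq^{2i})_1}-xq\sum_{i\ge 0}\frac{q^{i^2+2i}x^i}{(q)_i(x)_{i+1}(xq^{2i+1})_1}.
\end{align*}
In the paper's notation this is $f_{2,1}(x,1)-xq\,g_{2,1}(x,1)$, the analogue of Lemma \ref{lemm34} with $y=1$ and no $h$-sum.

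Next I would apply the first identity of Proposition \ref{propm1} with $\alpha=1$ and $\beta=-xq$ to raise $(a,r)$ from $(2,1)$ to $(3,2)$. I would then apply it a second time, exactly as in the proof of Proposition \ref{propm2}: group the terms into $q^2(q^{-1}x)_1$, $xq^3(q^{-2}x)_1$ and $(q^2)_1$ pieces, and use \eqref{eqo1}--\eqref{eqo2} to collapse $f_{4,3}-xyf_{6,3}$ and $g_{4,3}-qxyg_{6,3}$.

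The target formula contains only the factor $(q^{-1}x)_1=1-q^{m-1}$ and no $(q^{-2}x)_1$. So I expect to stop after the first step instead, and reorganise $(1-xq)f_{3,2}+x(q^2-xq)g_{3,2}-x(q^2-xq)f_{5,2}-x^2q(q^2-q^2x)g_{5,2}$ directly. Concretely, I would write every coefficient as a combination of $1$ and $(1-x/q)$, and then re-index the $i$-sums by shifting $i\to i-1$ where a factor $(1-q^i)$ appears. The terms without a factor $(1-x/q)$ should then telescope into the single term $1/(xq)_1$ (the $i=0$ term) plus the sum with numerator $x^{i+1}(1+xq^{3i+1})$ over $(xq^{2i-1})_3$. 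The remaining terms, each carrying $(q^{-1}x)_1$, give the three listed sums with denominators $(xq^{2i-1})_3$ or $(xq^{2i})_3$. To make the three-factor denominators appear, each $1/(xq^{2i})_1$ is multiplied through by the neighbouring factors $(1-xq^{2i\pm1})$.

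The main obstacle is this bookkeeping: verifying that after the shifts all terms combine exactly into the displayed form. I would check it by matching coefficients of each power of $x$ (or by comparing several initial $q$-expansion coefficients for $m=1,2,3$). Finally, for \eqref{eqd1} I would set $m=1$, so $x=q$. The factor $(q^{-1}x)_1$ vanishes and $(q)_0=1$. The first term is $1/(1-q^2)$, and the shift $i\to i+1$ in the remaining sum gives $q^{i^2+3i+1}q^{i+2}(1+q^{3i+5})/((q)_i(q^2)_i(q^{2i+2})_3)$. Using $(q^2)_i=(q)_{i+1}/(1-q)$, this is $(1-q)q^3q^{i^2+4i}(1+q^{3i+5})/((q)_i(q)_{i+1}(q^{2i+2})_3)$, as claimed.
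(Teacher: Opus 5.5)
\textbf{The reduction to the $f,g$ form is wrong.} Your first identity, from summing the geometric series in $c$, is fine, and so is your specialization to \eqref{eqd1} once the general formula is in place. But the next step is false, and everything downstream rests on it. The $m-1$ instance of the first identity is $q^{\binom m2}\sum_{i\ge0}q^{i^2+mi}/\bigl((q)_i(q)_{i+m-1}(1-q^{2i+m})\bigr)$. Multiply by $(q)_{m-1}q^{-\binom m2}$ and use $(q)_{i+m-1}=(q)_{m-1}(x)_i$ and $(q)_{i+m}=(q)_{m-1}(x)_{i+1}$. This gives
\[
(q)_{m-1}q^{-\binom m2}D_{V_d}(m,q)=f_{1,1}(x,1)-x\,g_{1,1}(x,1),
\]
not $f_{2,1}(x,1)-xq\,g_{2,1}(x,1)$. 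Your two sums carry spurious factors $q^{i}$ and $q^{i+1}$; this is the shape of Lemma \ref{lemm34} for $V$, not of the $V_d$ series. A quick check at $m=1$: the coefficient of $q$ in your expression is $1$, whereas $V_d(0,1)-V_d(1,1)=0$.

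So applying Proposition \ref{propm1} with $(a,r)=(2,1)$, $\beta=-xq$ cannot reach the target. Even there, the $f_{5,2}$ coefficient should be $-x(q-xq)$, not $-x(q^2-xq)$. The plan is also internally unsettled: you first propose a second application as in Proposition \ref{propm2}, then say you would stop after one.

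\textbf{The correct route.} Apply the four-term identity once, with $(a,r)=(1,1)$, $y=1$, $\alpha=1$, $\beta=-x$, and regroup as
\[
(1-x)\bigl(f_{2,2}-x^2q^2g_{4,2}\bigr)+xq(1-q^{-1}x)\bigl(g_{2,2}-f_{4,2}\bigr).
\]
The lone factor $(q^{-1}x)_1$ comes from this step, since $q^a\alpha+\beta=q^r\alpha+\beta=q-x$.

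The real content is then the explicit evaluation of the two brackets. Your proposal leaves this as ``should telescope'' and proposes to confirm it by comparing a few coefficients, which is not a proof.

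\emph{First bracket.} Split off the $i=0$ term $1/(x)_2$, which becomes $1/(xq)_1$ after multiplying by $1-x$. Shift $i\to i-1$ in the $g_{4,2}$ sum and write everything over $(q)_i(x)_i(xq^{2i-1})_3$. The numerator is $q^{i^2+i-1}x^i\bigl(q-xq^i+x^2q^{3i+1}-x^2q^{4i+1}\bigr)$, and it splits as $(q-x)+x(1-q^i)(1+xq^{3i+1})$. This produces the $(1+xq^{3i+1})$ sum and the first sum inside the parentheses.

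\emph{Second bracket.} Over $(q)_i(x)_{i+1}(xq^{2i})_3$ the numerator is $q^{i^2+2i}x^i(1-q^i)+q^{i^2+5i+2}x^{i+1}(1-xq^i)$. This produces the remaining two sums.
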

\begin{proof}
By Lemma \ref{lem22}, we obtain
\begin{align*}
\sum_{n\ge 0}V_d(m,n)q^n&=\sum_{c\ge 0}\sum_{ i\ge 0}\frac{q^{i^2+(m+2c+1)i+ (1+m)c+\frac{m(m+1)}{2}}}{(q)_{i}(q)_{i+m}}=\sum_{ i\ge 0}\frac{q^{i^2+(m+1)i+\frac{m(m+1)}{2}}}{(q)_{i}(q)_{i+m}(1-q^{2i+1+m})}.
\end{align*}
Setting $x:=q^m$, $f_{1,1}:=f_{1,1}(q^m, 1)$ and $g_{1,1}:= g_{1,1}(q^m,1)$.  We find that
\begin{align*}
\frac{(q)_{m-1}D_{V_d}(m,q)}{q^{{m(m-1)}/{2}}}&=\sum_{ i\ge 0}\frac{q^{i^2+mi}}{(q)_{i}(q^m)_{i}(q^{m+2i})_1}-\sum_{ i\ge 0}\frac{q^{i^2+(m+1)i+m}}{(q)_{i}(q^m)_{i+1}(q^{2i+1+m})_1}\\
&=f_{1,1}-x g_{1,1}.
\end{align*}
Recall the first identity in Proposition \ref{propm1} states that
\begin{align*}
\alpha f_{a,r}+\beta g_{a,r}=&(\alpha+\beta)f_{a+1,r+1}+x(q^a\alpha+\beta) g_{a+1,r+1}\\
&-xy(q^r\alpha+\beta)f_{a+3,r+1}-x^2yq(q^{a} \alpha+q^{r}\beta) g_{a+3,r+1},
\end{align*}
where
$$f_{a,r}(x,y)=\sum_{i\ge 0}\frac{q^{i^2+i(a-1)}x^i}{(q)_i(x)_{i}(xyq^{2i})_r}\;\;\text{and}\;\; g_{a,r}(x,y)=\sum_{i\ge 0}\frac{q^{i^2+ia}x^i}{(q)_i(x)_{i+1}(xyq^{2i+1})_r}.$$
Therefore, by using the above difference relations we obtain  
\begin{align}\label{eq111}
q^{-\binom{m}{2}}(q)_{m-1}D_{V_d}(m,q)=&f_{1,1}-x g_{1,1}\nonumber\\
=&(1-x)f_{2,2}+x(q-x) g_{2,2}-x(q-x)f_{4,2}-x^2q(q-qx) g_{4,2}\nonumber\\
=&(1-x)\left(f_{2,2}-x^2q^2g_{4,2}\right)+xq(1-q^{-1}x)\left(g_{2,2}-f_{4,2}\right).
\end{align}
Using the definition of $f_{a,r}$ and $g_{a,r}$, we find that
\begin{align*}
f_{2,2}-x^2q^2g_{4,2}
=&\sum_{ i\ge 0}\frac{q^{i^2+i}x^i}{(q)_{i}(x)_{i}(xq^{2i})_2}-q^{2}x^2\sum_{ i\ge 0}\frac{q^{i^2+4i}x^i}{(q)_{i}(x)_{i+1}(xq^{2i+1})_2}\\
=&\frac{1}{(x)_2}+\sum_{i\ge 1}\frac{q^{i^2+i}x^i(1-xq^{2i-1})}{(q)_{i}(x)_{i}(xq^{2i-1})_3}-\sum_{ i\ge 1}\frac{q^{i^2+2i-1}x^{i+1}(1-q^i)(1-xq^{2i+1})}{(q)_{i}(x)_{i}(xq^{2i-1})_3}\\
=&\frac{1}{(x)_2}+\sum_{ i\ge 1}\frac{q^{i^2+i-1}x^{i}(q-x+x-q^{i}x+x^2q^{3i+1}-x^2q^{4i+1})}{(q)_{i}(x)_{i}(xq^{2i-1})_3}\\
=&\frac{1}{(x)_2}+\sum_{ i\ge 1}\frac{q^{i^2+i}x^{i}(1-q^{-1}x)}{(q)_{i}(x)_{i}(xq^{2i-1})_3}+\sum_{ i\ge 1}\frac{q^{i^2+i-1}x^{i+1}(1+xq^{3i+1})}{(q)_{i-1}(x)_{i}(xq^{2i-1})_3},
\end{align*}
and
\begin{align*}
g_{2,2}-f_{4,2}=&\sum_{ i\ge 0}\frac{q^{i^2+2i}x^i}{(q)_{i}(x)_{i+1}(xq^{2i+1})_2}-\sum_{ i\ge 0}\frac{q^{i^2+3i}x^i}{(q)_{i}(x)_{i}(xq^{2i})_2}\\
=&\sum_{ i\ge 0}\frac{q^{i^2+2i}x^i(1-xq^{2i})-q^{i^2+3i}x^i(1-xq^i)(1-xq^{2i+2})}{(q)_{i}(x)_{i+1}(xq^{2i})_3}\\
=&\sum_{ i\ge 0}\frac{q^{i^2+2i}x^i(1-q^i)+q^{i^2+5i+2}x^{i+1}(1-xq^{i})}{(q)_{i}(x)_{i+1}(xq^{2i})_3}\\
=&\sum_{ i\ge 1}\frac{q^{i^2+2i}x^i}{(q)_{i-1}(x)_{i+1}(xq^{2i})_3}+\sum_{ i\ge 0}\frac{q^{i^2+5i+2}x^{i+1}}{(q)_{i}(x)_{i}(xq^{2i})_3}.
\end{align*}
Therefore, by inserting the above into \eqref{eq111}, we obtain
\begin{align*}
&q^{-{m(m-1)}/{2}}D_{V_d}(m,q)\\
=&\frac{1}{(q)_{m-1}(xq)_1}+\frac{1}{(q)_{m-1}}\sum_{ i\ge 1}\frac{q^{i^2+i-1}x^{i+1}(1+xq^{3i+1})}{(q)_{i-1}(xq)_{i-1}(xq^{2i-1})_3}\\
&+\frac{(q^{-1}x)_1}{(q)_{m-1}}\left(\sum_{ i\ge 1}\frac{q^{i^2+i}x^{i}}{(q)_{i}(xq)_{i-1}(xq^{2i-1})_3}+\sum_{ i\ge 1}\frac{q^{i^2+2i+1}x^{i+1}}{(q)_{i-1}(x)_{i+1}(xq^{2i})_3}+\sum_{ i\ge 0}\frac{q^{i^2+5i+3}x^{i+2}}{(q)_{i}(x)_{i}(xq^{2i})_3}\right),
\end{align*}
which completes the proofs.
\end{proof}
Based on Proposition \ref{pth43}, we give the proof of Theorem \ref{main4} as follows.
\begin{proof}[The proof of Theorem \ref{main4}]By the second identity in Proposition \ref{pth43}, we have
\begin{align*}
q^{-\binom{m}{2}}D_{V_d}(m,q)\succeq &\frac{1}{(q)_{m-1}(q^{m+1})_1}+\frac{1}{(q)_{m-1}}\sum_{ i\ge 0}\frac{ q^{i^2+(m+3)i+2m+1}(1+q^{3i+m+4})}{(q)_{i}(q^{m+1})_{i}(q^{m+2i+1})_3}\\
\succeq &\frac{1}{(q)_{m-1}(q^{m+1})_1}+\frac{q^{2m+1}(1+q^{m+4})}{(q)_{m-1}(q^{m+1})_3}+\frac{(q^m)_1}{(q)_{m}}\sum_{ i\ge 0}\frac{ q^{i^2+(m+3)i+2m+1}}{(q)_{i}(q^{m+1})_{i}}\\
\succeq &\frac{1+q^{2m+1}}{(q)_{m-1}(q^{m+1})_1}+(1-q^m)q^{2m+1}\sum_{ i\ge 0}\frac{ q^{i^2+(m+3)i}}{(q)_{i}(q)_{i+m}}.
\end{align*}
In particular,
\begin{align*}
D_{V_d}(1,q)\succeq &\frac{1+q^{3}}{1-q^{2}}+(1-q)q^{3}\sum_{ i\ge 0}\frac{ q^{i^2+4i}}{(q)_{i}(q)_{i+1}}\\
=&\frac{1+q-q(1-q^2)}{1-q^2}+\sum_{ i\ge 0}\frac{ q^{i^2+4i+3}}{(q)_{i}(q^2)_{i}}\succeq \frac{1}{1-q}-q,
\end{align*}
and for $m\ge 2$,
\begin{align*}
q^{-\binom{m}{2}}D_{V_d}(m,q)\succeq \frac{1+q^{2m+1}}{(q)_1 (q^2)_{m-2}(q^{m+1})_1}+\sum_{ i\ge 0}\frac{ q^{i^2+(m+3)i+2m+1}}{(q)_{i}(q)_{m-1}(q^{m+1})_{i-1}}\succeq \frac{1}{1-q}.
\end{align*}
By the definition of $D_{V_d}(m,q)$, the above yields  $V_d(m-1,n)>V_d(m,n)$
for all integers $m\ge 1$ and $n\ge \binom{m}{2}$, except that $V_d(0,1)\ge V_d(1,1)$. By the identity \eqref{eqd1} in Proposition \ref{pth43}, we see that
$V_d(0,1)=V_d(1,1)$, which completes the proof.  
\end{proof}

\section{Final remarks}\label{sec5}
In this section, we give a discussion of how our results connect to the monotonicity of cranks of integer partitions, and present some remarks on the proofs of the monotonicity of ranks for (strongly) concave compositions.
\subsection{Monotonicity of cranks of integer partitions} Our method also can applications to study the monotonicity of cranks of integer partitions. The Andrews-Garvan-Dyson crank statistic for integer partitions was introduced and investigated by Dyson \cite{MR3077150}, Andrews and Garvan \cite{MR929094, MR920146}, to give a explaination for famous partition congruences with modulus $5$, $7$ and $11$ of Ramanujan \cite{MR2280868}. As a precise definition of crank for integer partitions are not necessary for the following content, we do not give it here. Let $M(m, n)$ (with a slight modification in the case that $n = 1$, where the values are instead $M(\pm 1, 1) = 1, M(0, 1) = -1$) denote the number of partitions of $n$ with crank $m$. It is well--known that
\begin{align*}
\mathcal{C}(z,q):=\sum_{m\in\bz}\sum_{n\ge 0}M(m,n)z^mq^n=\frac{(q)_\infty}{(zq)_\infty (z^{-1}q)_\infty}.
\end{align*} 
In \cite{MR4324846}, Ji and Zang proved that for all $m \geq 1$ and $n \geq \max(44, m+1)$, one has the following inequality for the crank function $M(m,n)$:
\begin{equation}\label{JZie}
M(m-1,n) \geq M(m, n).
\end{equation}

By notice that $\mathcal{C}(z,q)=(q)_\infty \mathcal{V}^0(z,q)$, we have $D_{M}(m,q)=(q)_\infty D_{V^0}(m,q)$. Thus, by using Proposition \ref{pro320} we immediately obtain the following proposition.   
\begin{proposition}\label{pro5}
We have
\begin{align*}
D_{M}(1,q)
&=(q)_1(q)_3\sum_{i\ge 0}\frac{q^{i^2+5i}}{(q)_i(q)_{i+1}}+(q)_1^3\sum_{i\ge 0}\frac{q^{i^2+6i+6}}{(q)_{i}(q)_{i+1}}+(q)_1^3\sum_{i\ge 0}\frac{q^{i^2+7i+10}}{(q)_{i}(q)_{i+2}},
\end{align*}
and for $m\ge 2$,
\begin{align*}
D_{M}(m,q)&=(q)_1^2(q^{m+1})_1\sum_{i\ge 0}\frac{q^{i^2+i(m+3)+m-1}}{(q)_i(q)_{m+i-1}}+(q)_1^2(q^{m-2})_1\sum_{i\ge 0}\frac{q^{i^2+i(m+4)+2m+2}}{(q)_i(q)_{m+i}}.
\end{align*}
\end{proposition}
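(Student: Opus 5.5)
The plan is to use the relation $D_{M}(m,q)=(q)_\infty D_{V^0}(m,q)$ noted just before the statement. We multiply the $c=1$ case of the identities in Proposition \ref{pro320} by $(q)_\infty=(q)_1(q^2)_\infty$ and expand $f_a(q^m)$, $g_a(q^m)$ from their definitions. The only bookkeeping facts needed are
$$(q)_{m-1}(q^m)_i=(q)_{m+i-1},\qquad (q)_{m-1}(q^m)_{i+1}=(q)_{m+i},$$
together with $(q)_\infty/(q^2)_\infty=(q)_1$ and $(q)_\infty/(q^3)_\infty=(q)_2$.

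\emph{Case $m\ge 2$.} Proposition \ref{pro320} with $c=1$ gives
$$q^{1-m}D_{V^0}(m,q)=\frac{(q)_1(q^{m+1})_1}{(q)_{m-1}(q^2)_\infty}f_4(q^m)+q^{m+3}\frac{(q)_1(q^{m-2})_1}{(q)_{m-1}(q^2)_\infty}g_4(q^m).$$
After multiplying by $q^{m-1}(q)_1(q^2)_\infty$, we insert $f_4(q^m)=\sum_i q^{i^2+3i+mi}/((q)_i(q^m)_i)$ and $g_4(q^m)=\sum_i q^{i^2+4i+mi}/((q)_i(q^m)_{i+1})$. The first term becomes $(q)_1^2(q^{m+1})_1\sum_i q^{i^2+i(m+3)+m-1}/((q)_i(q)_{m+i-1})$. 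The second becomes $(q)_1^2(q^{m-2})_1\sum_i q^{i^2+i(m+4)+2m+2}/((q)_i(q)_{m+i})$, since $m-1+m+3=2m+2$. This is exactly the claimed formula, so this case is a direct substitution.

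\emph{Case $m=1$.} Here I would not use the final three-term expression for $D_{V^0}(1,q)$ in Proposition \ref{pro320}. Its pieces, for instance $q^8g_4(q^2)$, which produces exponents $i^2+6i+8$, do not match the target summands termwise. Instead I would start from the intermediate identity \eqref{eqdvc} with $c=1$, which gives
$$D_M(1,q)=(q)_1^3\bigl((1+q)f_4(q)-q^3g_4(q)\bigr).$$
The first target summand is, up to the prefactor $(q)_1(q)_3$, a $g$-type series $\sum_i q^{i^2+5i}/((q)_i(q)_{i+1})$. I would therefore rewrite $(1+q)f_4-q^3g_4$ as
$$(1-q^2)(1-q^3)\sum_i \frac{q^{i^2+5i}}{(q)_i(q)_{i+1}}+\text{remainder},$$
and then compute the remainder termwise in $i$. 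The tools are the splitting tricks from the proof of Proposition \ref{propm1}, namely writing $1=(1-q^i)+q^i$ and $1=(1-q^{i+1})+q^{i+1}$ and shifting the index. Two identities already established in the proof of Proposition \ref{pro320} should be used along the way:
$$f_4-q^4g_4=(1-q^4)f_5,\qquad f_4-(q)_1g_4=\frac{q^5}{(q)_1}g_4(q^2)=\sum_i\frac{q^{i^2+6i+5}}{(q)_i(q)_{i+2}}.$$
The goal is to show the remainder equals
$$\sum_i\frac{q^{i^2+6i+6}}{(q)_i(q)_{i+1}}+\sum_i\frac{q^{i^2+7i+10}}{(q)_i(q)_{i+2}}.$$

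The main obstacle is this $m=1$ regrouping, in particular producing the $i^2+7i+10$ series, which is the $i^2+6i+5$ series shifted. I would first try splitting $q^{i^2+6i+5}/((q)_i(q)_{i+2})$ via $1=(1-q^i)+q^i$: the $(1-q^i)$ part cancels one factor of $(q)_i$ and shifts $i\to i+1$, giving exponent $i^2+8i+12$, while the $q^i$ part gives $i^2+7i+5$. I would then combine the resulting pieces with the $q$-multiples of $f_4$ and $g_4$. If the bookkeeping becomes unwieldy, the fallback is to put every series over the common denominator $(q)_i(q)_{i+2}$, compare numerators for each fixed $i$, and confirm the identity by checking the first several coefficients.
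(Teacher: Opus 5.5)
Your $m\ge 2$ case is correct and follows the paper's own route: multiply the $c=1$ instance of the first identity in Proposition~\ref{pro320} by $(q)_\infty=(q)_1(q^2)_\infty$ and expand $f_4(q^m)$, $g_4(q^m)$.

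The $m=1$ case cannot be completed, because the displayed $m=1$ identity is false. No regrouping of $(1+q)f_4(q)-q^3g_4(q)$ will reach it. Compare the coefficients of $q$:
\begin{itemize}
\item On the left the coefficient is $M(0,1)-M(1,1)=-1-1=-2$. Equivalently, your correct starting point gives $(q)_1^3\bigl((1+q)f_4(q)-q^3g_4(q)\bigr)=1-2q+q^3+q^4-q^7+\cdots$.
\item On the right, every term except the $i=0$ term of the first sum is $O(q^6)$. That term is $(q)_3=1-q-q^2+q^4+q^5-q^6$, so the coefficient is $-1$.
\end{itemize}
Your reduction also contains a slip. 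Since $(q)_1(q)_3=(q)_1^3(1+q)(1-q^3)$, the coefficient of $g_4(q)=\sum_i q^{i^2+5i}/((q)_i(q)_{i+1})$ inside the bracket would have to be $(1+q)(1-q^3)$, not $(1-q^2)(1-q^3)$. Your variant, which amounts to the prefactor $(q)_1^2(q)_3$, is false too. It first fails at $q^7$, where the bracket has coefficient $4$ and your right-hand side has $3$.

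For $m=1$, the paper's one-line justification (multiply Proposition~\ref{pro320} by $(q)_\infty$) actually produces the $m=c=1$ formula times $(q)_\infty$:
\[
D_M(1,q)=(q)_1^3(q^4)_1\sum_{i\ge 0}\frac{q^{i^2+5i}}{(q)_i^2}+q(q)_1^3(q^2)_1\sum_{i\ge 0}\frac{q^{i^2+4i}}{(q)_i^2}+(q)_1^3\sum_{i\ge 0}\frac{q^{i^2+6i+8}}{(q)_i(q)_{i+2}}.
\]
This agrees with direct crank counts ($1-2q+q^3+q^4-q^7+\cdots$), not with the stated formula.

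You correctly noticed that this three-term expression does not match the target termwise. That mismatch should have led you to test the target, not to search for a regrouping. Your fallback of checking finitely many coefficients could never prove an identity of power series in any case. Here it would have refuted the claim at the first nontrivial coefficient.
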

We note that Proposition \ref{pro5} may can provide a alternative proof the Ji-Zang's crank inequality \eqref{JZie}, which we least to the interested reader. 

\subsection{Another approach to the inequalities for $V_d(m,n)$}
We present an alternative approach to the study of inequalities for ranks of strongly concave compositions.  Let $\left(\frac{\;\cdot\;}{\cdot}\right)$ denote the Kronecker symbol. From the proof of \cite[Proposition 1.2]{MR4001540} by Zhou, we have the following generating function
\begin{equation}\label{eq51}
\sum_{n\ge 0}V_d\left(m,n\right)q^n=\frac{q^{\binom{m+1}{2}}}{(q;q)_\infty}\sum_{\ell\ge 0}\left(\frac{-3}{2\ell+1}\right)q^{\frac{2}{3}\ell(\ell+1)+\ell m},
\end{equation}
for any $m\ge 0$.  By using \eqref{eq51}, the definition \eqref{eqda} of $D_{A}(m, q)$ and notice that the Kronecker symbol $\left(\frac{-3}{k}\right)$ is periodic in $k\in\mathbb{Z}$ with period $3$, we find that 
\begin{align}\label{eq52}
q^{-\binom{m+1}{2}}D_{V_d}(m+1,q)=&\frac{1-q^{m-{\bf 1}_{m>0}}}{(q)_\infty}\sum_{\ell\ge 0}\left(\frac{-3}{2\ell+1}\right)q^{\frac{2}{3}\ell(\ell+1)+m\ell}\nonumber\\
&+\frac{q^{m-{\bf 1}_{m>0}}}{(q)_\infty}\sum_{\ell\ge 0}\left(\frac{-3}{2\ell+1}\right)q^{\frac{2}{3}\ell(\ell+1)+m\ell}\left(1-q^{\ell+1+{\bf 1}_{m>0}}\right)\nonumber
\\
=&\frac{1}{(q^3)_\infty}\cdot\frac{1-q^{m-{\bf 1}_{m>0}}}{1-q}\sum_{\ell\ge 0}q^{6\ell^2+(3m+2)\ell}\frac{1-q^{2(4\ell+2+m)}}{1-q^2}+ q^{m-{\bf 1}_{m>0}} B(m,q),
\end{align}
for all integers $m\ge 0$, where
$$B(m,q)=\frac{1}{(q)_\infty}\sum_{\ell\ge 0}q^{6\ell^2+(3m+2)\ell}\left(1-q^{3\ell+1+{\bf 1}_{m>0}}-q^{8\ell+4+2m}+q^{11\ell+7+2m+{\bf 1}_{m>0}}\right).$$
Clearly, the coefficients in the $q$-expansion of the first term of \eqref{eq52} are already nonnegative for all $m\ge 0$. Therefore, to prove $V_d(m,n)>V_d(m+1,n)$, it suffices to study the signs of $c(m,n)$, the coefficient of $q^n$ in the $q$-expansion of $B(m,q)$.
\medskip

We now present an approach to attacking problems of this type. In view of the fact that $1/(q)_\infty$ is the generating function for the partition function $p(n)$, by convolution of power series, we see that $c(m,n)$ equals a linear combination of sums of $p(n)$ evaluated at values of quadratic polynomials. Using a version of the Hardy-Ramanujan asymptotic formula with an error term for $p(n)$, it is possible to establish an asymptotic formula with an error term for $c(m,n)$; see the proof of \cite[Theorem 1.3]{MR4001540} for an example. However, it is not easy to obtain an effective error bound using this method, because the first few leading terms in the Hardy-Ramanujan asymptotic formula are complicated. If we replace $1/(q)_\infty$ by a rational function each of whose poles is simple, then the coefficients of the $q$-expansion of this rational function can be approximated by a polynomial with a bounded error term. In this case, we find that $c(m,n)$ can be approximated by a linear combination of sums of polynomials evaluated at values of quadratic polynomials, and such sums can be computed explicitly because the sum of consecutive powers has a closed-form polynomial expression. Furthermore, the total error term can be tightly bounded, as all intermediate error terms are already well controlled. We note that this observation can be found in \cite[p.4]{MR4768277} by Zhou.

To apply the above observation, for any integer $m\ge 0$, we define
\begin{align*}
\sum_{n\ge 0}c_t(m,n)q^n=&\frac{1}{(1-q)(1-q^2)(1-q^3)(1-q^5{\bf 1}_{m=0})}\\
&\times\sum_{\ell\ge 0}q^{6\ell^2+(3m+2)\ell}\left(1-q^{3\ell+1+{\bf 1}_{m>0}}-q^{8\ell+4+2m}+q^{11\ell+7+2m+{\bf 1}_{m>0}}\right).
\end{align*}
Then, for any integer $m\ge 0$ we have
$$B(m,q)=\frac{1}{(1-q^4)(1-q^{5}{\bf 1}_{m>0})(q^6)_\infty}\sum_{n\ge 0}c_t(m,n)q^n.$$
It follows that the nonnegativity of $c_t(m,n)$ implies the positivity of $c(m,n)$. Moreover, by applying a similar argument as in Zhou \cite{MR4768277}, we can show that $c_t(m,n)>0$ for all integers $m, n\ge 0$, with the only exceptions
$$c_t(0,1)=c_t(0,4)=c_t(0,6)=c_t(0,7)=c_t(0,9)=0.$$
This result gives a proof of $V_d(m,n)>V_d(m+1,n)$ for $(m,n)\neq (0,1)$. We leave the remaining details to the interested reader.
\medskip

However, for the monotonicity of $V(m,n)$, the rank of concave compositions, a generating function like \eqref{eq51} for $V_d(m,n)$ remains to be established. 
We note that another bivariate generating function for $V(m,n)$ and $V_d(m,n)$ was found by Andrews--Rhoades--Zwegers \cite[Theorem 1.6]{MR3152010} as follows:
\begin{align*}
\sum_{\substack{n\ge 0\\ m\in\bz}}V(m,n)z^m q^{n}=\frac{q^{-1}}{(zq)_{\infty}(z^{-1})_{\infty}}\left(\frac{1}{(q)_{\infty}}\sum_{n\in\bz}\frac{q^{\frac{n(n+1)}{2}}(-z)^{-n}}{1-zq^n}-\sum_{n\ge 0}\frac{q^{n^2}}{(z)_{n+1}(z^{-1}q)_{n}}\right),
\end{align*}
and
$$
\sum_{\substack{n\ge 0\\ m\in\bz}}V_d(m,n)z^m q^{n}=(-z)_{\infty}(-z^{-1}q)_{\infty}\sum_{n\ge 0}\left(\frac{-12}{n}\right)z^{\frac{n-1}{2}}q^{\frac{n^2-1}{24}}-\sum_{n\ge 0}(-1)^nq^{\frac{n(n+1)}{2}}z^{2n+1}.
$$
We see that the bivariate generating function for $V(m,n)$ is more complicated than that of $V_d(m,n)$, which involves the inverse of the \emph{theta function}, the \emph{Appell-Lerch sums} and the \emph{universal mock theta function}. Thus it is expected that the single-variable generating function for $V(m,n)$ with $m$ given will be more complicated than \eqref{eq51}, and instead involves three-fold or higher-fold sums of \emph{partial theta functions}, and is beyond the scope of the current study.

\subsection*{Acknowledgements} This work was partially supported by Guangxi Natural Science Foundation (No. 2026GXNSFBA00640038) 
and National Natural Science Foundation of China (No. 12301423).

\begin{thebibliography}{10}

\bibitem{MR3048655}
George~E. Andrews.
\newblock Concave and convex compositions.
\newblock {\em Ramanujan J.}, 31(1-2):67--82, 2013.

\bibitem{MR929094}
George~E. Andrews and F.~G. Garvan.
\newblock Dyson's crank of a partition.
\newblock {\em Bull. Amer. Math. Soc. (N.S.)}, 18(2):167--171, 1988.

\bibitem{MR3152010}
George~E. Andrews, Robert~C. Rhoades, and Sander~P. Zwegers.
\newblock Modularity of the concave composition generating function.
\newblock {\em Algebra Number Theory}, 7(9):2103--2139, 2013.

\bibitem{MR3077150}
F.~J. Dyson.
\newblock Some guesses in the theory of partitions.
\newblock {\em Eureka}, (8):10--15, 1944.

\bibitem{MR920146}
F.~G. Garvan.
\newblock New combinatorial interpretations of {R}amanujan's partition
  congruences mod {$5,7$} and {$11$}.
\newblock {\em Trans. Amer. Math. Soc.}, 305(1):47--77, 1988.

\bibitem{MR4324846}
Kathy~Q. Ji and Wenston J.~T. Zang.
\newblock Unimodality of the {A}ndrews-{G}arvan-{D}yson cranks of partitions.
\newblock {\em Adv. Math.}, 393:Paper No. 108053, 54, 2021.

\bibitem{MR2280868}
S.~Ramanujan.
\newblock Some properties of {$p(n)$}, the number of partitions of {$n$}
  [{P}roc. {C}ambridge {P}hilos. {S}oc. {\bf 19} (1919), 207--210].
\newblock In {\em Collected papers of {S}rinivasa {R}amanujan}, pages 210--213.
  AMS Chelsea Publ., Providence, RI, 2000.

\bibitem{MR4001540}
Nian~Hong Zhou.
\newblock On the distribution of the rank statistic for strongly concave
  compositions.
\newblock {\em Bull. Aust. Math. Soc.}, 100(2):230--238, 2019.

\bibitem{MR4768277}
Nian~Hong Zhou.
\newblock Positivity and tails of pentagonal number series.
\newblock {\em J. Combin. Theory Ser. A}, 208:Paper No. 105933, 21, 2024.

\bibitem{MR4903333}
Nian~Hong Zhou.
\newblock Unimodality and certain bivariate formal {L}aurent series.
\newblock {\em European J. Combin.}, 128:Paper No. 104170, 21, 2025.

\end{thebibliography}

\end{document}